\newtheorem{proposition}{Proposition}
\newtheorem{theorem}{Theorem}
\newcommand{\hm}{\widehat{\mu}}
\newcommand{\hD}{\widehat{D}}
\newcommand{\hS}{\widehat{\Sigma}}
\newcommand{\Sxx}{\Sigma^{xx}}
\newcommand{\Sxv}{\Sigma^{xv}}
\newcommand{\Svx}{\Sigma^{vx}}
\newcommand{\Svv}{\Sigma^{vv}}
\newcommand{\ve}{\varepsilon}
\newcommand{\EE}{\mathbb{E}}
\newcommand{\RR}{\mathbb{R}}
\newcommand{\Dh}{\mathrm{D}_H}
\newcommand{\DV}{\mathrm{D}_V}
\newcommand{\fvt}{f_{v\theta}}
\newcommand{\apdx}{Appendix}
\newcommand{\rd}{\mathrm{d}}
\newcommand{\pt}{\partial_t}
\newcommand{\nt}{\nabla_\theta}
\newcommand{\nv}{\nabla_v}
\newcommand{\nx}{\nabla_x}
\newcommand{\dt}{\Delta t}
\newcommand{\lam}{\lambda}
\newcommand{\wh}{\widehat}
\newcommand{\tp}{^{\top}}
\newcommand{\parentheses}[1]{\left(#1\right)}
\newcommand{\sqbra}[1]{\left[#1\right]}
\newcommand{\abs}[1]{\left|#1\right|}
\newcommand{\norm}[1]{\left\|#1\right\|}
\newcommand{\rom}[1]{\uppercase\expandafter{\romannumeral #1\relax}}
\newcommand{\KL}[2]{\mathrm{D}_{\mathrm{KL}}\left(#1 \,||\, #2\right)}
\DeclareMathOperator{\Tr}{Tr}
\journal{Journal of Computational Physics}
\begin{document}

\begin{frontmatter}



\title{Simulating Fokker--Planck equations via mean field control of score-based normalizing flows}

\author[labelUCLA]{Mo Zhou} 
\author[labelUCLA]{Stanley Osher}
\author[labelUSC]{Wuchen Li}

\affiliation[labelUCLA]{organization={Department of Mathematics},
            addressline={University of California}, 
            city={Los Angeles},
            postcode={90095}, 
            state={CA},
            country={USA}}

\affiliation[labelUSC]{organization={Department of Mathematics},
            addressline={University of South Carolina},
            city={Columbia},
            postcode={29208},
            state={SC},
            country={USA}}




\begin{abstract}
The Fokker--Planck (FP) equation governs the evolution of densities for stochastic dynamics of physical systems, such as the Langevin dynamics and the Lorenz system. This work simulates FP equations through a mean field control (MFC) problem. We first formulate the FP equation as a continuity equation, where the velocity field consists of the drift function and the score function, i.e., the gradient of the logarithm of the density function. Next, we design a MFC problem that matches the velocity fields in a continuity equation with the ones in the FP equation. The score functions along deterministic trajectories are computed efficiently through the score-based normalizing flow, which only relies on the derivatives of the parameterized velocity fields. Numerical results, including Langevin dynamics, underdamped Langevin dynamics, chaotic systems, and high-dimensional interacting particle systems validate the effectiveness and scalability of our proposed algorithm. A convergence analysis is conducted for our algorithm on the FP equation of Ornstein--Uhlenbeck processes.
\end{abstract}



\begin{keyword}
Mean field control \sep Score-based normalizing flows \sep Flow matching \sep Lorenz systems \sep Underdamped Langevin dynamics. 

\MSC 34K35 \sep 37N35 \sep 58E25 \sep 93C15 \sep 93C20

\end{keyword}

\end{frontmatter}



\section{Introduction}
Simulating complex physical dynamics, especially chaotic systems, is a longstanding challenge \cite{yang2023optimal}. Important examples include Langevin dynamics and the Lorenz system. These systems have attracted significant attention across statistical physics, stochastic control, and machine learning disciplines. The Fokker--Planck (FP) equation, which describes the evolution of probability densities, is studied to analyze these systems because it captures essential properties, such as the free energy dissipation  \cite{bertini2015macroscopic,amari2016information,li2020fisher,te2020classical,lee2021controlling}. The free energy functional serves as a Lyapunov function for FP dynamics and characterizes convergence toward equilibrium through its dissipation \cite{jordan1998variational,ambrosio2008gradient}. In statistical physics, this corresponds to entropy production and the second law of thermodynamics. Numerical schemes that fail to preserve this dissipation structure may produce incorrect long-time behavior. Therefore, designing fast, efficient, and accurate algorithms for FP equations that preserve the free energy dissipation is a central problem in the scientific computing and mathematical data science communities.

An algorithm for simulating the FP equation can be designed from a mean field control (MFC) problem. The MFC models the optimal control of interacting particles' trajectories by minimizing a cost functional, which has wide applications in finance \cite{carmona2021deep}, epidemic control \cite{lee2021controlling}, and statistical physics \cite{bertini2015macroscopic}. In particular, the MFC for simulating the FP equation can be viewed as minimizing a discrepancy loss to learn the system’s dynamics, which is also known as the flow matching problem \cite{li2023self,albergo2023stochastic,lindsey2025mne}. Here, the diffusion in the FP equation can be represented using the score function, which is the gradient of the logarithm of the probability density function. Using the score function, the FP equation can be reformulated as a continuity equation for a deterministic dynamic. As a result, one needs to solve an MFC problem involving the score function, which matches the velocity fields for the continuity equation. In practice, efficiently computing the score function along trajectories remains a computational challenge.

In recent years, the machine learning communities have introduced normalizing flows \cite{rezende2015variational} and neural ordinary differential equations (ODEs) \cite{chen2018neural} for modeling and approximating deterministic dynamical systems. Normalizing flows construct a sequence of invertible push-forward maps that sequentially transfer one probability distribution into another, enabling flexible density estimation. Neural ODEs interpret the limit of residual neural networks as continuous-time dynamical systems. Despite their empirical success, these approaches' scalability and convergence behaviors are still not fully understood, particularly in stochastic systems, such as complex physical and chaotic systems. Recently, new methods have been proposed to estimate score functions via deterministic ODEs governed by learned velocity fields \cite{shen2022self, zhou2025score}, which is known as the score-based normalizing flow. These models bridge stochastic dynamics with deterministic control through score-based transformations. {\em A natural question arises: Can we apply the score-based neural ODE to simulate the general FP equations of Langevin dynamics?}

This work proposes an MFC framework for simulating FP equations using score-based normalizing flows. We formulate flow matching for the FP equation as a MFC problem, where the stochastic dynamics are converted into deterministic ones using the score function. We introduce score-based neural ODEs and score-based normalizing flows as efficient tools for approximating score functions and modeling the evolution of probability densities. The deterministic velocity field is parameterized via neural networks and optimized by minimizing a variational discrepancy loss. Our framework enables accurate computation of key quantities such as the free energy and its dissipation. Numerical experiments on Langevin dynamics, underdamped Langevin dynamics (ULDs), and various chaotic systems validate the effectiveness of our proposed algorithms. These experiments confirm that our approach accurately captures the density evolution and reliably computes free energy and its dissipation over time. We provide a convergence analysis for the Ornstein–-Uhlenbeck (OU) process.

\subsection*{Related work} 
The FP equation with gradient drift can be interpreted as a gradient flow of the relative entropy or free energy functional in the space of probability density with Wasserstein-$2$ metric \cite{ambrosio2008gradient, jordan1998variational}. This gradient flow implies the dissipation property of free energy. For FP equations with non-gradient drifts, the free energies are still dissipative. In statistical physics, the MFC formulation is introduced to study the FP equation \cite{bertini2015macroscopic}.  Mathematically, the theory of MFC and its counterpart, mean field games, has been developed extensively in \cite{carmona2018probabilistic, bensoussan2013mean, albi2017mean, fornasier2014mean, li2012stochastic}. It can be viewed as an optimal control problem \cite{yong1999stochastic} with dependence on distributions. These frameworks are particularly effective for modeling high-dimensional interacting systems. In recent years, there has been growing interest in connecting MFC and optimal control with machine learning algorithms, including flow matching methods \cite{ruthotto2020machine, guo2019learning, hu2023recent, reisinger2020rectified, hu2021mean}.

In approximating the FP equation for complex systems, the score function plays an important role. Various methods have been proposed to estimate the score function, including score matching \cite{hyvarinen2005estimation,10.5555/1795114.1795156}, kernel density estimation \cite{epstein2025scoredebiased,zhou2024deep}, and denoising autoencoder \cite{vincent2008extracting, vincent2011connection}. \cite{boffi2023probability} is closely related to our work. In that approach, the score function is directly parameterized and learned via a score-matching objective at each time step. In contrast, we parameterize the composed velocity field and evolve the score through a coupled ODE system, resulting in a trajectory-level velocity-matching formulation rather than a time-local score-matching procedure. A score-based framework for generative modeling, namely time-reversible diffusion models, is introduced in \cite{song2021scorebased}. This work is further extended to the Schr\"odinger bridges and stochastic control problems in \cite{de2021diffusion}. Normalizing flows \cite{rezende2015variational, papamakarios2021normalizing} and neural ordinary differential equations (neural ODEs) \cite{chen2018neural} enable continuous-time density estimation by learning deterministic transport maps. Recent work on flow matching replaces exact log-likelihood objectives with trajectory-level matching losses between learned and target dynamics \cite{lipman2023flow}. \cite{grathwohl2019ffjord} introduced scalable generative models based on neural ODEs with exact change-of-variable computation. Different from existing literature, we designed an algorithm that uses score-based normalizing flow to simulate the FP equation.

The rest of this paper is organized as follows. In Section \ref{sec:background}, we introduce the theoretical background, including the FP equation with entropy dissipation, formulation of flow matching for FP equations as an MFC problem, and the construction of score-based normalizing flow. In Section \ref{sec:algorithm}, we introduce the numerical algorithms for solving the flow matching problem. In Section \ref{sec:results}, we present numerical results on both low- and high-dimensional Fokker--Planck equations, including Langevin dynamics, underdamped Langevin dynamics, chaotic systems, and high-dimensional interacting particle systems, to demonstrate the effectiveness and scalability of the proposed method. In Section \ref{sec:convergence}, we provide a convergence analysis for the OU process. 

\section{Theoretical background}\label{sec:background}
In this section, we introduce the FP equation with its entropy dissipation property. Then we formulate flow matching for FP equations as MFC problems that involve the score functions. We introduce the score-based normalizing flow as an efficient way to compute the score function. We clarify some notations first. $\nx$ denotes the gradient or Jacobian matrix of a function w.r.t. the variable $x \in \RR^d$, where the gradient is always a column vector. $\nx\cdot$, $\nx^2$, and $\Delta_x$ are the divergence, the Hessian, and the Laplacian w.r.t. the variable $x$. $\abs{\cdot}$ is the absolute value of a scalar, the $l_2$ norm of a vector, or the Frobenius norm of a matrix. $\Tr(\cdot)$ denotes the trace of a square matrix. $\norm{\cdot}_2$ denotes the $l_2$ operator norm of a matrix. 

\subsection{Fokker--Planck equation and entropy dissipation}
Consider the FP equation
\begin{equation}\label{eq:FP_b}
\pt \rho(t,x) + \nx\cdot\parentheses{\rho(t,x) b(x)} = \ve \Delta_x \rho(t,x) ~,\quad \rho(0,\cdot) = \rho_0,
\end{equation}
where $b(x): \RR^d \to \RR^d$ is the drift function and $\ve > 0$ is the diffusion constant. $\rho(t,x)$ is the probability density function for the stochastic differential equation (SDE)
\begin{equation}\label{eq:SDE_x}
\rd x_t = b(x_t) \,\rd t + \sqrt{2\ve}\,\rd W_t ~, \quad x_0 \sim \rho_0.
\end{equation}
Throughout the paper, we assume $b(x)$ satisfies the local Lipschitz and Lyapunov condition
$$x\tp b(x) \le K(1 + |x|^2)$$
such that the SDE \eqref{eq:SDE_x} admits a unique global strong solution on any finite time interval $[0,T]$ \cite{mao2007stochastic}.
Since the FP equation \eqref{eq:FP_b} is time-homogeneous, i.e. $b(x)$ does not depend on $t$, the FP equation admits a stationary distribution $\pi(x)$ under mild regularity condition such as the Foster--Lyapunov criteria \cite{meyn1993stability}. This $\pi(x)$ satisfies the stationary FP equation
\begin{equation*}
\nx \cdot (\pi(x) b(x)) = \ve \Delta_x \pi(x).
\end{equation*}
We define the Kullback–-Leibler (KL) divergence as
\begin{equation*}
\KL{\rho}{\pi} := \int \rho(x) \log \dfrac{\rho(x)}{\pi(x)} \,\rd x.
\end{equation*}
Then, the FP equation satisfies the following entropy dissipation property.
\begin{proposition}[Dissipation of relative entropy]\label{prop:dissipation_entropy}
Let $\rho$ be the solution to the FP equation \eqref{eq:FP_b}, then
\begin{equation*}
\dfrac{\rd}{\rd t} \KL{\rho(t,\cdot)}{\pi} = - \ve \int \abs{\nx \log\dfrac{\rho(t,x)}{\pi(x)}}^2 \rho(t,x) \,\rd x.
\end{equation*}
\end{proposition}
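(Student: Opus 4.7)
The plan is to differentiate the KL divergence directly in time, substitute the FP equation, and then use the stationary FP equation to eliminate the drift contribution through the divergence-free structure of a suitable flux.

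First I would compute
\begin{equation*}
\dfrac{\rd}{\rd t}\KL{\rho(t,\cdot)}{\pi} = \int \pt\rho \,\log\dfrac{\rho}{\pi}\,\rd x + \int \rho\,\pt\!\log\dfrac{\rho}{\pi}\,\rd x.
\end{equation*}
Since $\pi$ is time-independent, the second integrand equals $\pt\rho$, whose integral over $\RR^d$ vanishes by mass conservation. Thus only the first term remains, and I would substitute $\pt\rho = \ve\Delta_x\rho - \nx\cdot(\rho b)$ and integrate by parts (assuming sufficient decay of $\rho$ and $\nx\rho$ at infinity, so boundary terms vanish). This gives
\begin{equation*}
\dfrac{\rd}{\rd t}\KL{\rho}{\pi} = -\ve\int \nx\rho\cdot\nx\log\dfrac{\rho}{\pi}\,\rd x + \int \rho\, b\cdot\nx\log\dfrac{\rho}{\pi}\,\rd x.
\end{equation*}

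Next I would use the splitting $\nx\log\rho = \nx\log(\rho/\pi) + \nx\log\pi$ to rewrite $\nx\rho = \rho\,\nx\log(\rho/\pi) + \rho\,\nx\log\pi$. Plugging in produces the desired Fisher-information term $-\ve\int\rho\,|\nx\log(\rho/\pi)|^2\,\rd x$ plus a cross term
\begin{equation*}
\int \rho\parentheses{b - \ve\,\nx\log\pi}\cdot\nx\log\dfrac{\rho}{\pi}\,\rd x.
\end{equation*}
The key observation is that the stationary FP equation $\nx\cdot(\pi b) = \ve\Delta_x\pi$ rewrites as $\nx\cdot\bigl(\pi(b-\ve\nx\log\pi)\bigr) = 0$, so the vector field $w := \pi(b-\ve\nx\log\pi)$ is divergence-free.

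Writing $u := \rho/\pi$, the cross term becomes $\int u\, w\cdot\nx\log u\,\rd x = \int w\cdot\nx u\,\rd x$, which integrates by parts to $-\int u\,\nx\cdot w\,\rd x = 0$. Combining with the previous step yields the claimed identity. The main technical point, rather than any deep difficulty, is justifying the two integrations by parts; I would state mild regularity/decay hypotheses on $\rho$ and $\pi$ (for instance, $\rho,\nx\rho$ tending to zero at infinity and $\rho|\nx\log(\rho/\pi)|^2\in L^1$) so that the boundary contributions and the Fisher integral are well-defined.
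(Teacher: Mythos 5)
Your proof is correct and follows essentially the same route as the paper's: both hinge on the observation that the stationary equation makes the flux $\pi\,(b-\ve\nx\log\pi)$ divergence-free, so the cross term against the relative score $\nx\log(\rho/\pi)$ integrates to zero, leaving only the Fisher-information term. Your one-line disposal of the cross term (writing it as $\int w\cdot\nx(\rho/\pi)\,\rd x$ and integrating by parts once) is in fact a tidier version of the longer chain of integrations by parts the paper uses for the same orthogonality fact.
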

Here, $\nx \log\dfrac{\rho(t,x)}{\pi(x)}$ is the relative score function.

A similar property holds for the ULDs. The ULD is an SDE that models the motion of a particle subject to deterministic forces, random thermal fluctuations, and damping effects \cite{cheng2018underdamped}. The dynamic is
\begin{equation}\label{eq:ULD}
\begin{cases}
\rd x_t = v_t \,\rd t\\
\rd v_t = -\parentheses{\gamma v_t + \nx U(x_t)} \,\rd t + \sqrt{2\gamma \beta^{-1}} \, \rd W_t.
\end{cases}
\end{equation}
Here, $\gamma$ is the damping coefficient, $\beta^{-1}$ is the temperature, and $U(x)$ is the potential function governing the deterministic forces. The FP equation for ULD with density function $\rho(t,x,v)$ is
\begin{equation}\label{eq:FP_ULD}
\pt\rho + \nx\cdot\parentheses{v\rho} - \nv\cdot\parentheses{(\gamma v+\nx U)\rho} = \frac{\gamma}{\beta} \Delta_v \rho.
\end{equation}
We define the Hamiltonian as $H(x,v) = \frac12 |v|^2 + U(x)$. The stationary distribution for ULD is
\begin{equation*}
\pi(x,v) = \dfrac{1}{Z} \exp\sqbra{-\beta\parentheses{\frac12|v|^2 + U(x)}} = \dfrac{1}{Z} \exp\parentheses{-\beta H(x,v)},
\end{equation*}
where $Z = \int_{\RR^{2d}} \exp\sqbra{-\beta\parentheses{\frac12|v|^2 + U(x)}} \rd x\,\rd v$ is the normalization constant. We define the free energy for a density function $\rho(x,v)$ as
\begin{equation}\label{eq:ULD_energy}
\Dh(\rho) = \int_{\RR^{2d}} \parentheses{\log\rho(x,v) + \beta H(x,v)} \rho(x,v) \,\rd x\,\rd v.
\end{equation}
We can also show the dissipation of energy along the dynamic.
\begin{proposition}[Free energy dissipation of ULD]\label{prop:ULD_dissipation}
The ULD satisfies the following energy dissipation formula
\begin{equation}\label{eq:ULD_dissipation}
\dfrac{\rd}{\rd t} \Dh(\rho(t,\cdot,\cdot)) = - \dfrac{\gamma}{\beta} \int_{\RR^{2d}} \rho(t,x,v) \abs{\nv\log\parentheses{\dfrac{\rho(t,x,v)}{\pi(x,v)}}}^2 \rd x \, \rd v.
\end{equation}
\end{proposition}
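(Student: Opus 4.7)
The plan is to differentiate $\Dh(\rho(t,\cdot,\cdot))$ in time, substitute the continuity-equation form of the Fokker--Planck equation~\eqref{eq:FP_ULD}, and use integration by parts in both $x$ and $v$. The structural point is that the Hamiltonian/symplectic piece of the drift preserves $\Dh$ and only the dissipative piece in the $v$-direction contributes.

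First, since $\int\rho\,\rd x\,\rd v\equiv 1$, I would compute
\begin{equation*}
\dfrac{\rd}{\rd t}\Dh(\rho(t,\cdot,\cdot)) = \int_{\RR^{2d}} (\log\rho(t,x,v) + \beta H(x,v))\,\pt\rho(t,x,v)\,\rd x\,\rd v,
\end{equation*}
since the extra term $\int \pt\rho$ vanishes. Next, I would use the ``continuity'' rewrite of the FP equation given right after~\eqref{eq:FP_ULD},
\begin{equation*}
\pt\rho = -\nx\cdot(v\rho) + \nv\cdot\parentheses{\parentheses{\gamma v + \nx U + \tfrac{\gamma}{\beta}\nv\log\rho}\rho},
\end{equation*}
and integrate by parts (assuming decay of $\rho$ at infinity to kill boundary terms). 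Noting $\nx(\log\rho+\beta H) = \nx\log\rho+\beta\nx U$ and $\nv(\log\rho+\beta H) = \nv\log\rho+\beta v$, this gives
\begin{equation*}
\dfrac{\rd}{\rd t}\Dh = \int (\nx\log\rho+\beta\nx U)\cdot v\,\rho\,\rd x\,\rd v \;-\; \int (\nv\log\rho+\beta v)\cdot\parentheses{\gamma v + \nx U + \tfrac{\gamma}{\beta}\nv\log\rho}\rho\,\rd x\,\rd v.
\end{equation*}

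The key observation is that $\log\pi(x,v) = -\beta H(x,v)-\log Z$, so $\nv\log(\rho/\pi) = \nv\log\rho+\beta v$. Setting $A:=\nv\log\rho+\beta v$, the bracket in the second integrand rewrites as $\nx U + \tfrac{\gamma}{\beta}A$ (since $\tfrac{\gamma}{\beta}\nv\log\rho = \tfrac{\gamma}{\beta}A-\gamma v$, which cancels the $\gamma v$ term). Thus the second integral splits into $\int A\cdot\nx U\,\rho + \tfrac{\gamma}{\beta}\int|A|^2\rho$. In the first integral, $\int \nx\log\rho\cdot v\,\rho = \int v\cdot\nx\rho = -\int\rho\,\nx\cdot v = 0$ by integration by parts in $x$ (since $v$ is $x$-independent), leaving $\beta\int v\cdot\nx U\,\rho$. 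Combining,
\begin{equation*}
\dfrac{\rd}{\rd t}\Dh = \beta\int v\cdot\nx U\,\rho \;-\; \int(\nv\log\rho)\cdot\nx U\,\rho \;-\; \beta\int v\cdot\nx U\,\rho \;-\; \dfrac{\gamma}{\beta}\int |\nv\log(\rho/\pi)|^2\rho.
\end{equation*}

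Finally, the $\beta v\cdot\nx U$ terms cancel, and $\int (\nv\log\rho)\cdot\nx U\,\rho = \int\nx U\cdot\nv\rho = -\int\rho\,\nv\cdot\nx U = 0$ by integration by parts in $v$ (since $\nx U$ is $v$-independent). This yields exactly~\eqref{eq:ULD_dissipation}. The main obstacle, if any, is bookkeeping the cancellations of the Hamiltonian/conservative terms; these rely crucially on the identities $\nv H=v$, $\nx H=\nx U$, $\nv\log\pi=-\beta v$, and on $v$ being $x$-independent and $\nx U$ being $v$-independent, together with enough decay of $\rho$ to justify the integrations by parts.
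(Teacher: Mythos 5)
Your proof is correct, and it reaches the identity by a more elementary route than the paper. The paper's proof first packages the Fokker--Planck operator structurally: writing $D=\frac{\gamma}{\beta}\begin{bmatrix}0&0\\0&I_d\end{bmatrix}$ and the skew-symmetric $J$, it shows $\pt\rho=\nabla_{x,v}\cdot\bigl[\rho\,(D+J)\,\nabla_{x,v}\log(\rho/\pi)\bigr]$ (using $\nabla\cdot(\rho J\nabla\log\rho)=\nabla\cdot(J\nabla\rho)=0$), after which a single integration by parts plus the skew-symmetry of $J$ kills the conservative contribution and leaves only the $D$-quadratic form, i.e.\ the $\nv$-relative-score term. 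You instead substitute the continuity-equation form directly and perform the integrations by parts in $x$ and $v$ separately, cancelling the Hamiltonian transport terms by hand via $\nx\cdot v=0$, $\nv\cdot\nx U=0$, and the matching $\beta\, v\cdot\nx U$ terms; this is exactly the same cancellation the paper encodes in the skew-symmetry of $J$, just tracked termwise. What the paper's formulation buys is transparency of the mechanism (dissipative symmetric part versus conservative skew part) and easy generalization to other degenerate or non-gradient drifts; what yours buys is a self-contained computation with no matrix bookkeeping. Both arguments, yours and the paper's, implicitly assume enough decay of $\rho$ at infinity to discard boundary terms, which you state explicitly --- a point worth keeping.
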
 

\subsection{Mean field control problem for Fokker--Planck equations}\label{sec:flow_matching}
Flow matching for FP equation can be viewed as a MFC problem with a quadratic discrepancy cost
\begin{equation}\label{eq:MFC_v}
\inf_{v,\rho} \int_0^T \int_{\RR^d} \abs{v(t,x) - b(x)}^2 \rho(t,x) \,\rd x \,\rd t
\end{equation}
subject to the controlled FP equation
\begin{equation}\label{eq:FP_v}
\pt \rho(t,x) + \nx\cdot\parentheses{\rho(t,x) v(t,x)} = \ve \Delta_x \rho(t,x) ~,\quad \rho(0,\cdot) = \rho_0,
\end{equation}
where $T>0$ is the terminal time. The controlled state dynamic is characterized by the FP equation \eqref{eq:FP_v}, where one aims to match the target dynamic \eqref{eq:FP_b}. The optimal control field to this MFC problem is $v^*(t,x) = b(x)$.

In this work, we reformulate the flow matching problem using the score function. By the identity $\nx \rho / \rho = \nx \log \rho$, we can rewrite the FP equation \eqref{eq:FP_b} as
\begin{equation}\label{eq:blob}
\pt \rho(t,x) + \nx\cdot\sqbra{\parentheses{b(x) - \ve \nx\log\rho(t,x)}\rho(t,x)} = 0.
\end{equation}
Given the density function $\rho(t,x)$, we define the composed velocity field as $f(t,x) := b(x) - \ve \nx\log\rho(t,x)$, then the MFC problem for flow matching \eqref{eq:MFC_v} can be rewritten as
\begin{equation}\label{eq:MFC_f}
\inf_{f,\rho} \int_0^T \int_{\RR^d} \abs{f(t,x) - b(x) + \ve \nx\log\rho(t,x)}^2 \rho(t,x) \,\rd x \,\rd t
\end{equation}
subject to the controlled continuity equation
\begin{equation}\label{eq:continuity}
\pt \rho(t,x) + \nx\cdot\parentheses{\rho(t,x) f(t,x)} = 0 ~,\quad \rho(0,\cdot) = \rho_0.
\end{equation}
\begin{proposition}[Well-posedness of the MFC]
The mean field control problem \eqref{eq:MFC_f}--\eqref{eq:continuity} admits a unique solution pair $(f^*,\rho^*)$. In particular, $\rho^*$ coincides with the solution to the Fokker--Planck equation \eqref{eq:FP_b}, and the optimal velocity field is given by
\[
f^*(t,x) = b(x) - \varepsilon \nabla_x \log \rho^*(t,x).
\]
\end{proposition}
\begin{proof}
Let $\rho^*$ denote the solution to \eqref{eq:FP_b}, and define $f^*(t,x) := b(x) - \varepsilon \nabla_x \log \rho^*(t,x)$. By construction, the pair $(f^*,\rho^*)$ satisfies the continuity equation \eqref{eq:continuity}. Moreover, substituting $(f^*,\rho^*)$ into the objective functional \eqref{eq:MFC_f}, the integrand vanishes identically, and hence the minimal value of the cost is zero. Therefore, $(f^*,\rho^*)$ is an optimal solution.\\
We next prove uniqueness. Suppose $(f',\rho')$ is another optimal solution to \eqref{eq:MFC_f}--\eqref{eq:continuity}. The optimality implies the objective \eqref{eq:MFC_f} is $0$ and
\[
f'(t,x) = b(x) - \varepsilon \nabla_x \log \rho'(t,x).
\]
Substituting this into \eqref{eq:continuity}, we find that $\rho'$ satisfies the Fokker--Planck equation \eqref{eq:FP_b}.
\end{proof}

The advantage of this formulation is that the state dynamic is completely governed by a deterministic velocity field $f(t,x)$ without diffusion. Consequently, we are able to compute the reverse process, which has important applications in reversible diffusion \cite{agmon1990theory} and generative models \cite{song2021scorebased}. This formulation \eqref{eq:MFC_f} also incurs a computational challenge---approximating the score function $\nx \log\rho(t,x)$.

\subsection{The score-based normalizing flow}\label{sec:SBNF}
We introduce the score-based normalizing flow in this section. Consider a probability flow, where the state process is governed by the ODE $\pt x_t = f(t,x_t)$ with random initialization $x_0 \sim \rho_0$. Let $\rho(t,\cdot)$ be the probability density function for $x_t$, then $\rho(t,x)$ satisfies the continuity equation \eqref{eq:continuity}. We denote $L_t = \rho(t,x_t)$, $l_t = \log\rho(t,x_t)$, $s_t = \nx\log\rho(t,x_t)$, and $H_t = \nx^2 \log\rho(t,x_t)$, then they satisfy the following ODE systems \cite{shen2022self,boffi2023probability,zhou2025score}.
\begin{proposition}[Score-based neural ODE systems]\label{prop:ODE}
Let $x_t$ satisfy $\pt x_t = f(t,x_t)$. Then, $L_t = \rho(t,x_t)$, $l_t = \log\rho(t,x_t)$, $s_t = \nx\log\rho(t,x_t)$, and $H_t = \nx^2 \log\rho(t,x_t)$ satisfy
\begin{subequations}\label{eq:ODE_systems}
\begin{align}
\pt L_t &= -\nx \cdot f(t,x_t) L_t, \label{eq:Lt_ODE}\\
\pt l_t &= -\nx \cdot f(t,x_t), \label{eq:lt_ODE}\\
\pt s_t &= -\nx f(t,x_t)\tp s_t - \nx(\nx\cdot f(t,x_t)), \label{eq:st_ODE}\\
\pt H_t &= -\sum_{i=1}^d s_{it} \nx^2 f_i(t,x_t) - \nx^2(\nx\cdot f(t,x_t)) - H_t \nx f(t,x_t) -  \nx f(t,x_t) \tp H_t, \label{eq:Ht_ODE}
\end{align}
\end{subequations}
where $s_{it}$ and $f_i$ are the $i$-th component of $s_t$ and $f$ respectively.
\end{proposition}

With $f$ parametrized, we can compute the score function $s_t = \nx \log\rho(t,x_t)$ along the trajectory efficiently \cite{zhou2025score}.

\section{Numerical algorithm}\label{sec:algorithm}
In this section, we present the numerical algorithm for solving the MFC problem \eqref{eq:MFC_f}. We parametrize the composed velocity field $f(t,x)$ as a multilayer perceptron (MLP) neural network $f_\theta(t,x)$, where $\theta$ denotes its parameters. Throughout the paper, we assume the drift $b$ and the initial density $\rho_0$ are known.

\subsection{Numerical algorithm for flow matching}
Let $N_x$ and $N_t$ denote the number of samples and the number of sub-intervals for time. We partition the time interval $[0,T]$ into $N_t$ uniform subintervals with length $\dt = T / N_t$ and denote the discrete time points by $t_j = j \dt$ for $j=0,\ldots,N_t$. Given initial samples $\{x_0^{(n)}\}_{n=1}^{N_x}$ drawn i.i.d. from $\rho_0$, we simulate the ODE dynamics \eqref{eq:ODE_systems} through
\begin{subequations}
\begin{align}
x_{t_{j+1}} &= x_{t_j} + \dt \, f_\theta(t_j, x_{t_j}), \label{eq:xt_discrete}\\
L_{t_{j+1}} &= L_{t_j} - \dt \, \nx\cdot f_\theta(t_j, x_{t_j}) L_{t_j}, \label{eq:Lt_discrete}\\
l_{t_{j+1}} &= l_{t_j} - \dt \, \nx\cdot f_\theta(t_j, x_{t_j}), \label{eq:lt_discrete}\\
s_{t_{j+1}} &= s_{t_j} - \dt \parentheses{ \nx f_\theta(t_j, x_{t_j})\tp s_{t_j} + \nx( \nx\cdot f_\theta(t_j, x_{t_j})) }, \label{eq:st_discrete} \\
H_{t_{j+1}} &= H_{t_j} - \dt \left( \sum_{i=1}^d s_{it_j} \nx^2 (f_\theta)_i(t_j,x_{t_j}) + \nx^2(\nx\cdot f_\theta(t_j,x_{t_j})) \right. \label{eq:Ht_discrete}\\
& \hspace{0.9in}  + H_{t_j} \nx f_\theta(t_j,x_{t_j}) + \nx f_\theta(t_j,x_{t_j}) \tp H_{t_j}  \bigg),\nonumber
\end{align}
\end{subequations}
where the derivatives of $f$ are obtained through auto-differentiation. The variational objective \eqref{eq:MFC_f} is then approximated using Monte Carlo sampling and discretization
\begin{equation}\label{eq:loss_discrete}
L(\theta) = \dfrac{1}{N_x} \sum_{n=1}^{N_x} \sum_{j=0}^{N_t-1} \abs{ f_\theta(t_j,x_{t_j}^{(n)}) - b(x_{t_j}^{(n)}) + \ve s_{t_j}^{(n)}}^2 \dt,
\end{equation}
and minimized through backpropagation of the ODE system using Adam method \cite{Kingma2015adam}. We conclude this method in Algorithm \ref{alg:FM_one_interval}.

\begin{algorithm}[htb]
\caption{Flow matching solver through score-based normalizing flow}
\begin{algorithmic}[1]
\STATE \textbf{Input:}~ Flow matching problem, neural network structure, number of steps $N_{\text{step}}$, learning rate, $N_x,N_t$
\STATE \textbf{Output:}~ Approximated composed velocity field $f_\theta(\cdot,\cdot)$ for the MFC problem
\STATE \textbf{Initialization:}~Parameter $\theta$ for the composed neural network, $\dt=T/N_t$
\FOR{$\text{step}=1,2,\ldots, N_{\text{step}}$}
\STATE{Sample $\{x_0^{(n)}\}_{n=1}^{N_x}$ i.i.d. from $\rho_0$}
\STATE{Compute $s_0^{(n)} = \nx\log\rho_0(x_0^{(n)})$ for $n=1,\ldots,N_x$.}
\STATE{$\text{Loss} = 0$}
\FOR{$j=0,1,\ldots,N_t-1$}
\STATE{$\text{Loss} = \text{Loss} + \frac{1}{N_x} \sum\limits_{n=1}^{N_x} \abs{f_\theta(t_j,x^{(n)}_{t_j}) - b(x^{(n)}_{t_j}) + \ve s_{t_j}^{(n)}}^2\dt$}\hfill\COMMENT{update loss}
\STATE{$x^{(n)}_{t_{j+1}} = x^{(n)}_{t_j} + \dt \, f_\theta(t_j, x^{(n)}_{t_j})$}\hfill\COMMENT{update state}
\STATE{$s^{(n)}_{t_{j+1}} = s^{(n)}_{t_j} - \dt \parentheses{ \nx f_\theta(t_j, x^{(n)}_{t_j})\tp s^{(n)}_{t_j} + \nx( \nx\cdot f_\theta(t_j, x^{(n)}_{t_j})) }$}\hfill\COMMENT{update score}
\ENDFOR
\STATE{Update $\theta$ through Adam method to minimize $\text{Loss}$}
\ENDFOR
\end{algorithmic}
\label{alg:FM_one_interval}
\end{algorithm}

\subsection{Numerical algorithms for flow matching over long horizons}\label{sec:algorithm_long}
For FP equations with long time horizons, it is challenging to characterize the entire dynamic using a single neural network. To address this, we adopt a multi-stage approach by decomposing the time interval $[0,T]$ into $N_T$ stages (subintervals), each of length $T':=T / N_T$. We denote the stage endpoints by $T_m = m T'$ for $m=1,\ldots,N_T$, and define each stage interval as $I_m := [T_{m-1}, T_{m}]$. 

For each stage $I_m$,  we parameterize the velocity field by an independent neural network $f_m(t,x) := f_{\theta_m}(t,x)$. The flow matching model is trained sequentially for each stage. The initial states and score functions for stage $I_m$ are obtained by simulating the normalizing flow dynamics \eqref{eq:xt_discrete} and \eqref{eq:st_discrete} using the previously trained network. 

To improve efficiency, for stages $m=2,\ldots, N_T$, we initialize the parameter $\theta_m$ as $\theta_{m-1}$, from the preceding stage, following a warm-start strategy \cite{zhou2023neural}. This initialization provides a good starting point, often leading to faster convergence and reduced training iterations in subsequent stages. We summarize the algorithm for flow matching problems with long time horizons in Algorithm \ref{alg:FM_multi_interval}.

\begin{algorithm}[htb]
\caption{Flow matching solver for long time horizon}
\begin{algorithmic}[1]
\STATE \textbf{Input:}~ Flow matching problem, neural network structure for each stage, number of steps for the first stage $N_{\text{step}0}$ and the following stages $N_{\text{step}}$, learning rages, $N_x,N_t, N_T$
\STATE \textbf{Output:}~ Approximated composed velocity fields $\{f_m(\cdot,\cdot)\}_{m=1}^{N_T}$ for the MFC problem
\STATE \textbf{Initialization:}~Parameter $\theta_1$ for the first neural network $f_1(\cdot,\cdot)$
\STATE{Apply Algorithm \ref{alg:FM_one_interval} to train $f_1(\cdot,\cdot)$ for $N_{\text{step}0}$ steps with $\dt=T/(N_T N_t)$.}
\FOR{$m = 2,3,\ldots,N_T$} 
\STATE{Initialize $\theta_m$ as $\theta_{m-1}$ from previous stage}
\hfill \COMMENT{warm-start}
\FOR{$\text{step}=1,2,\ldots, N_{\text{step}}$}
\STATE{Sample $\{x_0^{(n)}\}_{n=1}^{N_x}$ i.i.d. from $\rho_0$}
\STATE{Compute $s_0^{(n)} = \nx\log\rho_0(x_0^{(n)})$ for $n=1,\ldots,N_x$.}
\FOR{$m' = 1,\ldots,m-1$}
\STATE{$t_0 = (m'-1)T'$}\hfill\COMMENT{initial time for the stage}
\FOR{$j=0,\ldots,N_t-1$}
\STATE{$t_{j+1} = ((m'-1)N_t+j+1)\dt$}\hfill\COMMENT{set time stamp}
\STATE{$x^{(n)}_{t_{j+1}} = x^{(n)}_{t_j} + \dt \, f_{m'}(t_j, x^{(n)}_{t_j})$}\hfill\COMMENT{update state}
\STATE{$s^{(n)}_{t_{j+1}} = s^{(n)}_{t_j} - \dt \parentheses{ \nx f_{m'}(t_j, x^{(n)}_{t_j})\tp s^{(n)}_{t_j} + \nx( \nx\cdot f_{m'}(t_j, x^{(n)}_{t_j})) }$}\hfill\COMMENT{update score}
\ENDFOR
\ENDFOR
\STATE{$\text{Loss} = 0$}\hfill\COMMENT{compute loss for stage $m$}
\STATE{$t_0 = (m-1)T'$}
\FOR{$j=0,1,\ldots,N_t-1$}
\STATE{$\text{Loss} = \text{Loss} + \frac{1}{N_x} \sum_{n=1}^{N_x} \abs{f_m(t_j,x^{(n)}_{t_j}) - b(x^{(n)}_{t_j}) + \ve s_{t_j}^{(n)}}^2\dt$}\hfill\COMMENT{update loss}
\STATE{$t_{j+1} = ((m-1)N_t+j+1)\dt$}\hfill\COMMENT{set time stamp}
\STATE{$x^{(n)}_{t_{j+1}} = x^{(n)}_{t_j} + \dt \, f_m(t_j, x^{(n)}_{t_j})$}\hfill\COMMENT{update state}
\STATE{$s^{(n)}_{t_{j+1}} = s^{(n)}_{t_j} - \dt \parentheses{ \nx f_m(t_j, x^{(n)}_{t_j})\tp s^{(n)}_{t_j} + \nx( \nx\cdot f_m(t_j, x^{(n)}_{t_j})) }$}\hfill\COMMENT{update score}
\ENDFOR
\STATE{Update $\theta_m$ through Adam method to minimize $\text{Loss}$}
\ENDFOR
\ENDFOR
\end{algorithmic}
\label{alg:FM_multi_interval}
\end{algorithm}

\section{Numerical results}\label{sec:results}
In this section, we present numerical results including Langevin dynamics, ULDs, chaotic systems, and interacting particle systems. All experiments were conducted on an NVIDIA TITAN V GPU using NVIDIA driver version 535.183.01 and CUDA 12.2. For all the problems, we apply a step size of $\dt=0.01$.

\subsection{Langevin dynamic}\label{sec:Langevin}
We consider the Langevin dynamic \eqref{eq:SDE_x} with $d=2n$ and $b(x) = -(I_d + c J_d) \nx V(x)$, where
\begin{equation*}
J_d = \begin{bmatrix} 0& I_n\\-I_n&0 \end{bmatrix}
\end{equation*}
is skew symmetric, $c=0.5$, and $V(x)$ is the potential function such that $Z := \int_{\RR^d} \exp(-V(x) / \ve) \,\rd x$ is finite. The invariant distribution is $\pi(x) = \exp(-V(x)/\ve)/Z$. We test Algorithm \ref{alg:FM_one_interval} with two potential functions: a quadratic function $V(x) = \frac12 |x|^2$ and a doublewell potential function $V(x) = \frac14 |x-c_1|^2 |x-c_2|^2$, where $c_1$ and $c_2$ are vectors in $\RR^d$ with all entries being $1$ and $-1$ respectively.

We define the Gibbs free energy functional as
\begin{equation}\label{eq:free_energy}
\DV(\rho(t,\cdot)) = \int_{\RR^d} \parentheses{ \log\rho(t,x) + \frac{1}{\ve} V(x)} \rho(t,x) \,\rd x.
\end{equation}
Then we can verify that
\begin{equation}\label{eq:normalization_constant}
\KL{\rho(t,\cdot)}{\pi} = \DV(\rho(t,\cdot)) + \log Z,
\end{equation}
where $Z$ is the partition function. By Proposition \ref{prop:dissipation_entropy},
\begin{equation}\label{eq:energy_dissipation}
\dfrac{\rd}{\rd t} \DV(\rho(t,\cdot)) = - \ve \int \abs{\nx \log\dfrac{\rho(t,x)}{\pi(x)}}^2 \rho(t,x) \,\rd x.
\end{equation}

\subsubsection{Langevin dynamic with quadratic potential}\label{sec:LangevinGaussian}
The numerical results for Langevin dynamic in $2$ dimensions with a quadratic potential function $V(x) = \frac12 |x|^2$ and $\ve=0.5$ are presented in Figure \ref{fig:LangevinOU}. The first figure in the first row shows the training curve in $\log 10$ scale. The discrepancy loss reaches an order of $\num{e-4}$. The shaded region in the loss curve represents the standard deviation observed during $10$ independent runs. The second and third figures on the first row show the free energy and its dissipation along time.
The free energy \eqref{eq:free_energy} and its dissipation \eqref{eq:energy_dissipation} are approximated numerically through
\begin{equation}\label{eq:free_energy_discrete}
\widehat{\DV}(\rho(t_j,\cdot)) = \dfrac{1}{N_x} \sum_{n=1}^{N_x} \parentheses{l^{(n)}_{t_j} + \frac{1}{\ve} V(x^{(n)}_{t_j})},
\end{equation}
and
\begin{equation}\label{eq:energy_dissipation_discrete}
\widehat{\pt \DV}(\rho(t_j,\cdot)) = - \dfrac{\ve}{N_x} \sum_{n=1}^{N_x} \abs{s^{(n)}_{t_j} - \nx\log \pi(x^{(n)}_{t_j})}^2.
\end{equation}
The approximation of the free energy over time, plotted in blue line in the second figure, is decaying and is aligned with the true energy shown as a dashed orange line. The approximated dissipation, plotted in blue in the third figure, is getting closer to $0$, and is aligned with the true value. The visualization of $f_\theta$ at $t=0$ is shown in the second row in Figure \ref{fig:LangevinOU}. The trained neural network accurately captures the true velocity field. We also report the errors for the neural network, the density and the score function: 
\begin{equation*}
\begin{aligned}
\text{err}_{f} &= \dfrac{1}{N_x}  \dfrac{1}{N_t+1} \sum_{n=1}^{N_x} \sum_{j=0}^{N_t} \abs{f\parentheses{t_j, x^{(n)}_{t_j};\theta} - f\parentheses{t_j, x^{(n)}_{t_j}}},\\
\text{err}_{\rho} &= \dfrac{1}{N_x} \dfrac{1}{N_t} \sum_{n=1}^{N_x}\sum_{j=1}^{N_t}  \abs{L^{(n)}_{t_{j}} - \rho(t_{j},x^{(n)}_{t_{j}})}, \\
\text{err}_{s} &= \dfrac{1}{N_x} \dfrac{1}{N_t} \sum_{n=1}^{N_x}\sum_{j=1}^{N_t}  \abs{ s^{(n)}_{t_{j}} - \nx \log \rho\parentheses{t_{j}, x^{(n)}_{t_{j}}}}.
\end{aligned}
\end{equation*}
Note that the errors for the density and score at $t=0$ is $0$, so we start at $j=1$. After training, the errors achieve $\text{err}_{f} = \num{1.68e-2}$, $\text{err}_{\rho} = \num{6.93e-3}$, and $\text{err}_{s} = \num{1.06e-2}$. The training time is $529$ seconds. In addition, equation \eqref{eq:normalization_constant} provides a way to estimate the normalization constant $Z$. When $T$ is sufficient large, $\KL{\rho(T,\cdot)}{\pi} \approx 0$ and hence
\begin{equation}\label{eq:estimation_Z}
Z \approx \exp\parentheses{- \DV(\rho(T,\cdot))}.
\end{equation}
Using this formulation, we obtain an estimation $\exp(-\widehat{\DV}(\rho(T,\cdot))) = 3.1091$ (against the true value $\pi$), with a relative error of $1.03\%$.

\begin{figure}[!htbp]
\centering
\includegraphics[width=0.35\linewidth]{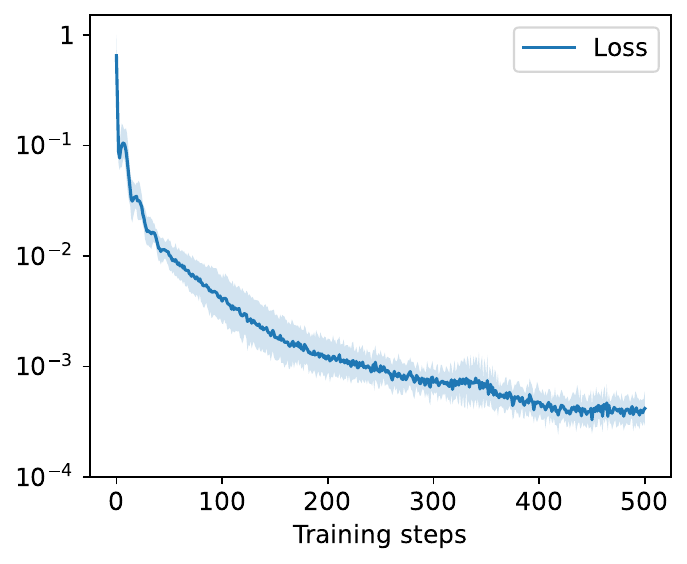}
\includegraphics[width=0.64\linewidth]{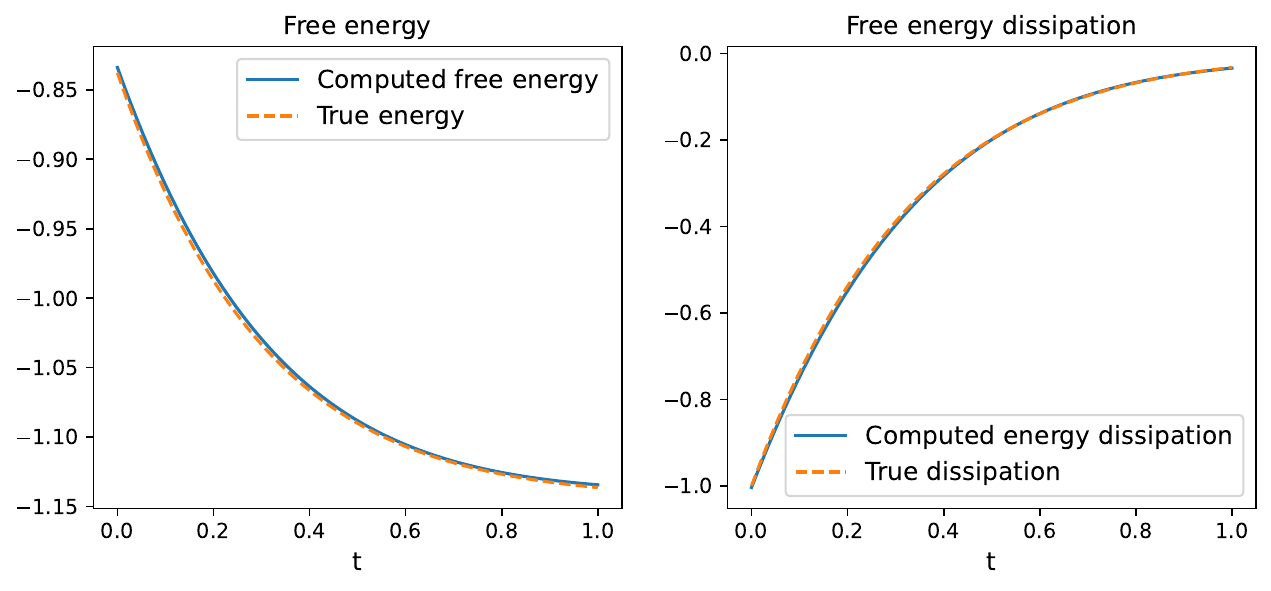}
\includegraphics[width=0.9\linewidth]{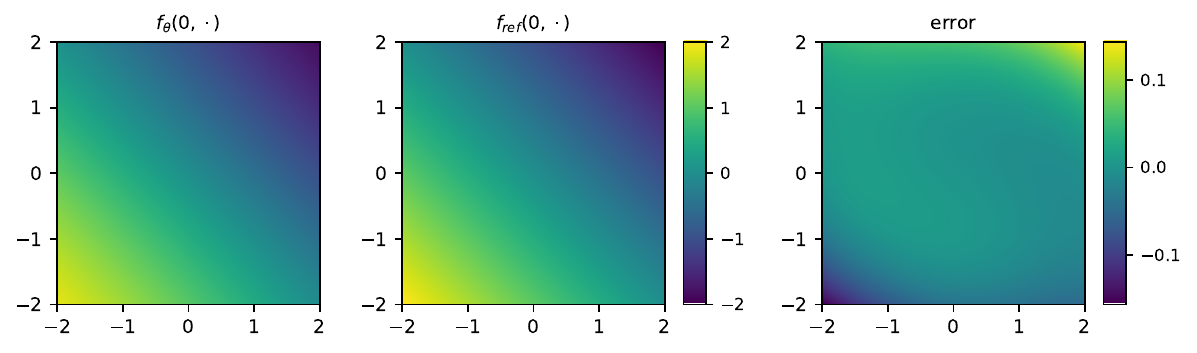}
\caption{Numerical results for Langevin dynamic with quadratic potential. First row: the curve for loss function in $\log 10$ scale, the curve for free energy and its dissipation along time. Second row: display the neural network, the reference solution, and the error at $t=0$ as images.}
\label{fig:LangevinOU}
\end{figure}

\subsubsection{Langevin dynamic with double-well potential}
Next, we consider the Langevin dynamic with double-well potential function
$V(x) = \frac14 |x-c_1|^2 |x-c_2|^2$, where $c_1$ and $c_2$ are vectors in $\RR^d$ with all entries being $1$ and $-1$ respectively.

The training time is $422$ seconds. The numerical results in $2$ dimensions are shown in Figure \ref{fig:LangevinDW}. The first figure shows the scattered plot of the particles under the trained ODE dynamic \eqref{eq:xt_discrete} at terminal time $T=1$ and the level sets of the stationary density function $\pi(x)$. The second figure shows the 3D histogram plot for the particles at $T$ and the surface plot for $\pi(x)$. We observe that $x_t$ correctly demonstrates the bifurcation phenomenon and splits into two piles, which coincide with the invariant distribution. The third and fourth figures show the free energy and its dissipation, computed through \eqref{eq:free_energy_discrete} and \eqref{eq:energy_dissipation_discrete}. In addition, we also report that the estimated normalization constant through \eqref{eq:estimation_Z} is $1.81726$, while a reference value obtained from Riemann sum is $1.83388$. The relative error for this estimation is $0.906\%$.

\begin{figure}[!htbp]
\centering
\includegraphics[width=0.49\linewidth]{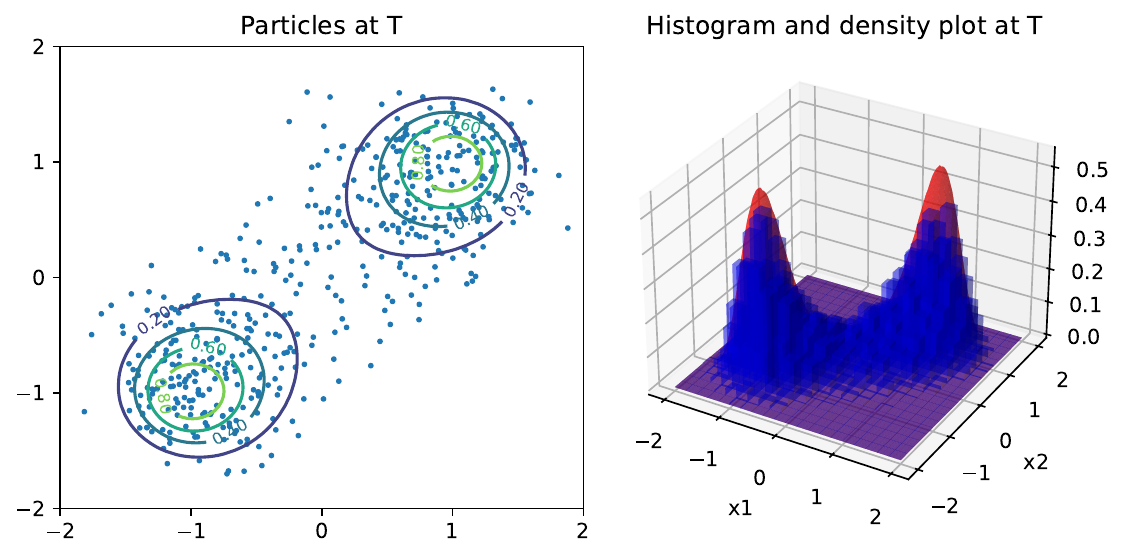}
\includegraphics[width=0.49\linewidth]{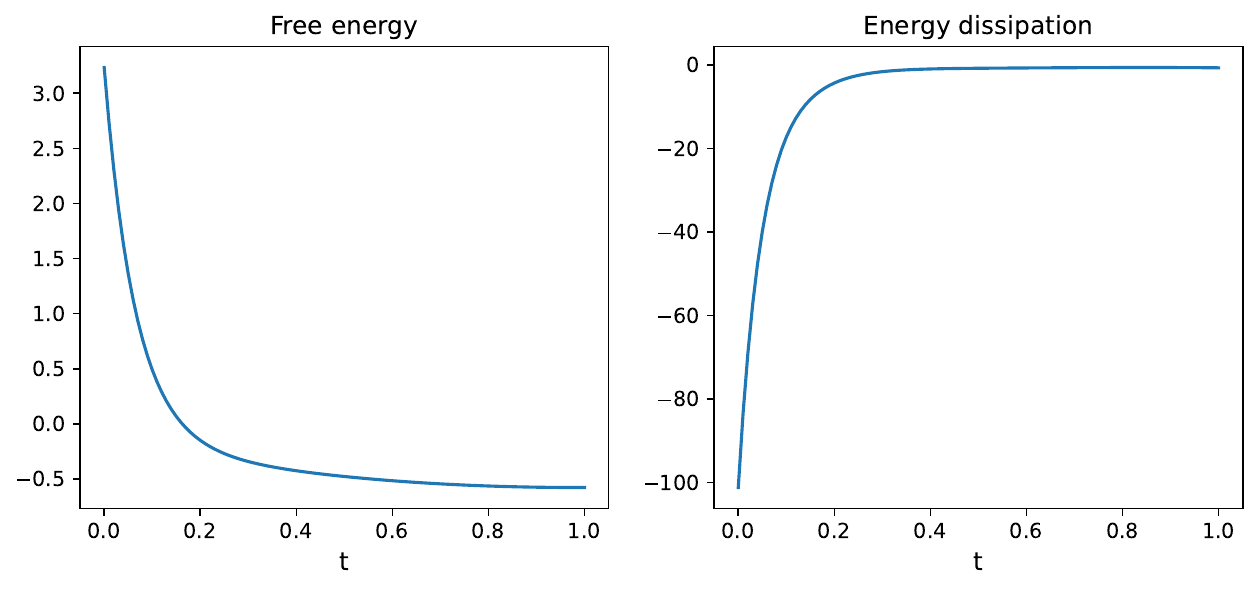}
\caption{Numerical results for Langevin dynamic with double-well potential. The first two figures show the scattered plot and histogram of the particles at terminal time $T$. The third and fourth figures show the free energy and its dissipation.}
\label{fig:LangevinDW}
\end{figure}

\subsection{Underdamped Langevin dynamic}
We present the numerical results for the ULDs in this section. Analogous to \eqref{eq:blob}, the FP equation for ULD \eqref{eq:FP_ULD} can be rewritten as
$$\pt\rho + \nx\cdot\parentheses{v\rho} - \nv\cdot\parentheses{(\gamma v+\nx U + \frac{\gamma}{\beta} \nv\log\rho)\rho} = 0.$$ 
Since there is no noise for $x_t$, we parametrize the composed velocity for $v$ as $\fvt(t,x,v)$, and formulate the flow matching MFC problem for ULD as
\begin{align*}
\inf_{f_v,\rho} \,& \int_0^T \int_{\RR^{2d}} \abs{f_v(t,x,v) + (\gamma v + \nx U(x)) + \gamma \beta^{-1} \nv\log\rho(t,x,v)}^2 \rho(t,x,v) \,\rd x \,\rd v \,\rd t\\
\text{s.t.} \,\,& \pt \rho(t,x) + \nx\cdot\parentheses{\rho(t,x,v) v} + \nv\cdot\parentheses{\rho(t,x,v) f_v(t,x,v)} = 0 ~,\quad \rho(0,\cdot,\cdot) = \rho_0.
\end{align*}
Since $\KL{\rho(t,\cdot,\cdot)}{\pi} = \Dh(\rho(t,\cdot,\cdot)) + \log Z$, we approximate the normalization constant through
\begin{equation}\label{eq:ULD_Z_approximation}
Z \approx \exp\parentheses{- \Dh(\rho(T,\cdot,\cdot))},
\end{equation}
where $T$ is sufficiently large.

We present the numerical results for a quadratic potential $U(x) = \frac12|x|^2$ and double-well potential $U(x) = \frac14 |x-c_1|^2 |x-c_2|^2$, which are the same as the potential functions in Section \ref{sec:Langevin}, with dimensions $2d=2$. The numerical implementation is in the same spirit as Algorithm \ref{alg:FM_multi_interval} in Section \ref{sec:algorithm}, and the details are deferred to the \apdx.

\subsubsection{Underdamped Langevin dynamic with quadratic potential}
The numerical results for $U(x)=\frac12 |x|^2$ are presented in Figure \ref{fig:ULD_Gaussian}. The training time is 779 seconds. The first row shows the training curve, the free energy and its dissipation. The discrepancy loss is less than $\num{1e-3}$. The approximation for the free energy and its dissipation accurately captures the reference value. The second and third rows in Figure \ref{fig:ULD_Gaussian} show the trained neural network at time $0$ and $T=5$, which accurately captures the true value. We also report that the errors for the neural network, density and score functions are $\text{err}_{f} = \num{1.58e-2}$, $\text{err}_{\rho} = \num{1.83e-3}$, $\text{err}_{s} = \num{3.73e-2}$. Additionally, the estimation for the normalization constant through \eqref{eq:ULD_Z_approximation} is $6.27844$ against the true value $2\pi$, with a relative error of $0.475\%$.

\begin{figure}[!htbp]
\centering
\includegraphics[width=0.34\linewidth]{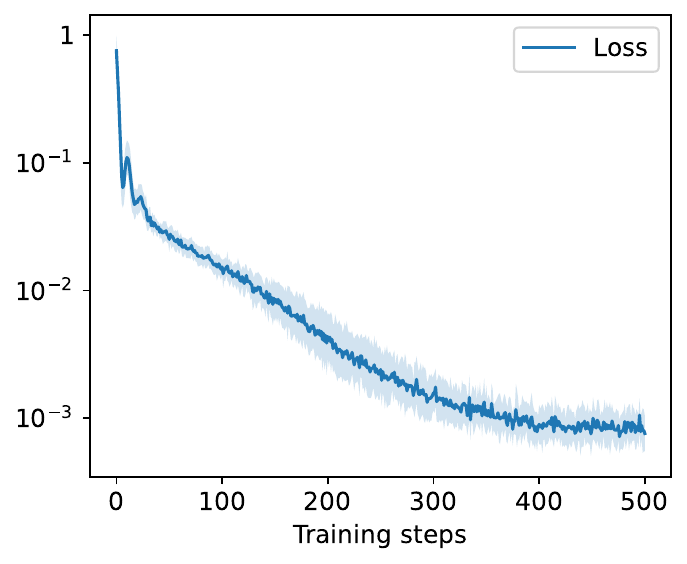}
\includegraphics[width=0.64\linewidth]{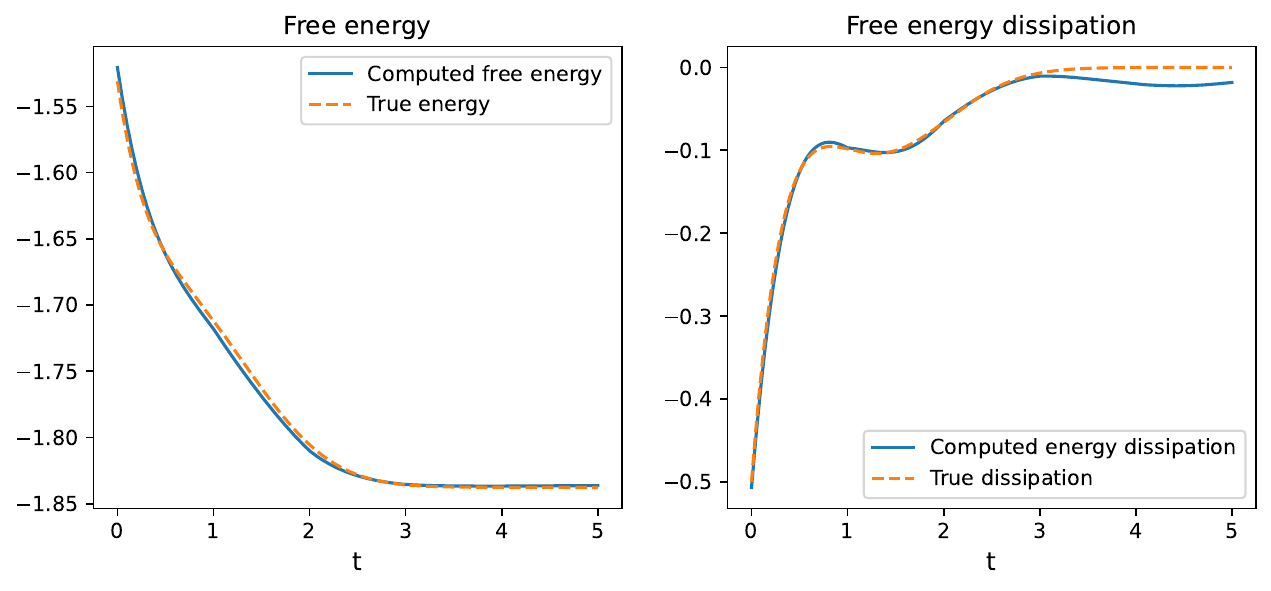}
\includegraphics[width=0.9\linewidth]{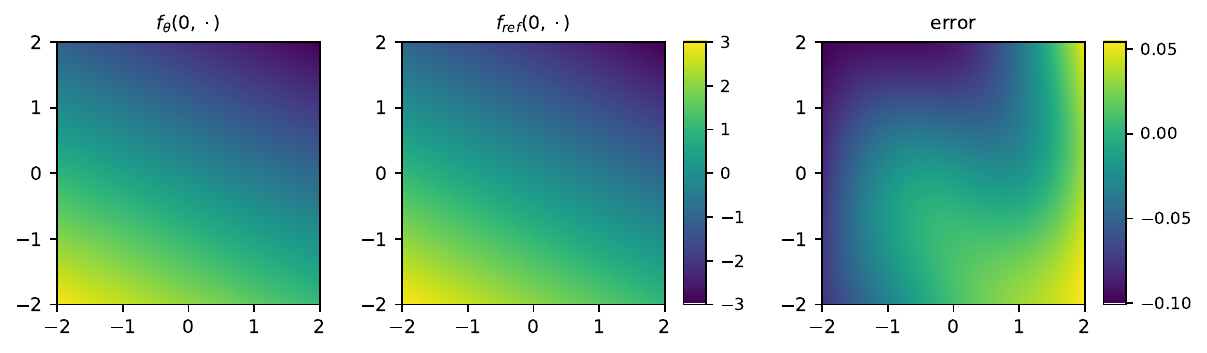}
\includegraphics[width=0.9\linewidth]{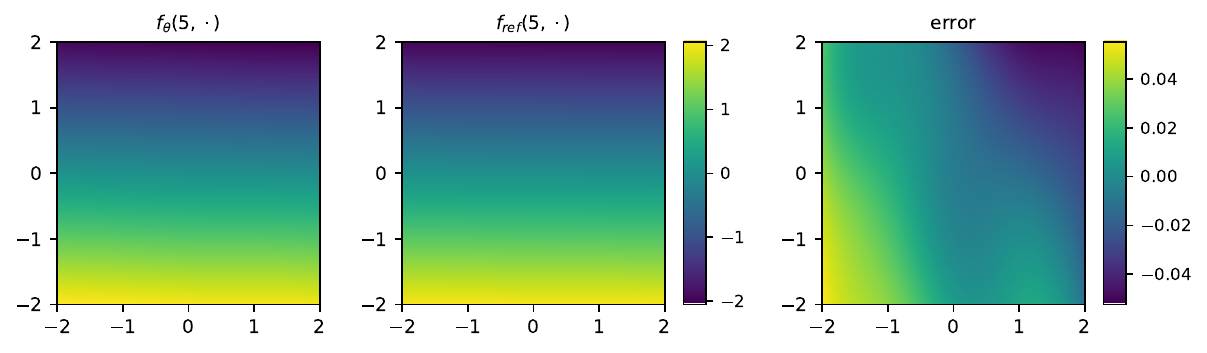}
\caption{Underdamped Langevin dynamic with quadratic potential. First row: learning curve, computed evolution of the free energy, and its dissipation along time. Second and third rows: display the neural network, the reference solution, and the error as images at $t=0$ and $t=5$.}
\label{fig:ULD_Gaussian}
\end{figure}

\subsubsection{Underdamped Langevin dynamic with double-well potential}
Next, we present the numerical result for underdamped Langevin dynamics with a double-well potential function. Since there is no reference solution, we compare the behavior of the dynamic \eqref{eq:xt_discrete} with the discretized Langevin dynamic. The training time is $667$ seconds. The numerical results is presented in Figure \ref{fig:ULDDW}, where each figure shows the density and velocity field of the particles. The first row shows the deterministic probability flow dynamic under the trained velocity field. We observe that the deterministic dynamic correctly captures the density evolution of ULD dynamic. Additionally, the velocity field for the deterministic dynamic is more organized than the stochastic ULD due to the absence of Brownian motion.

\begin{figure}[!htbp]
    \centering
    \includegraphics[width=\linewidth]{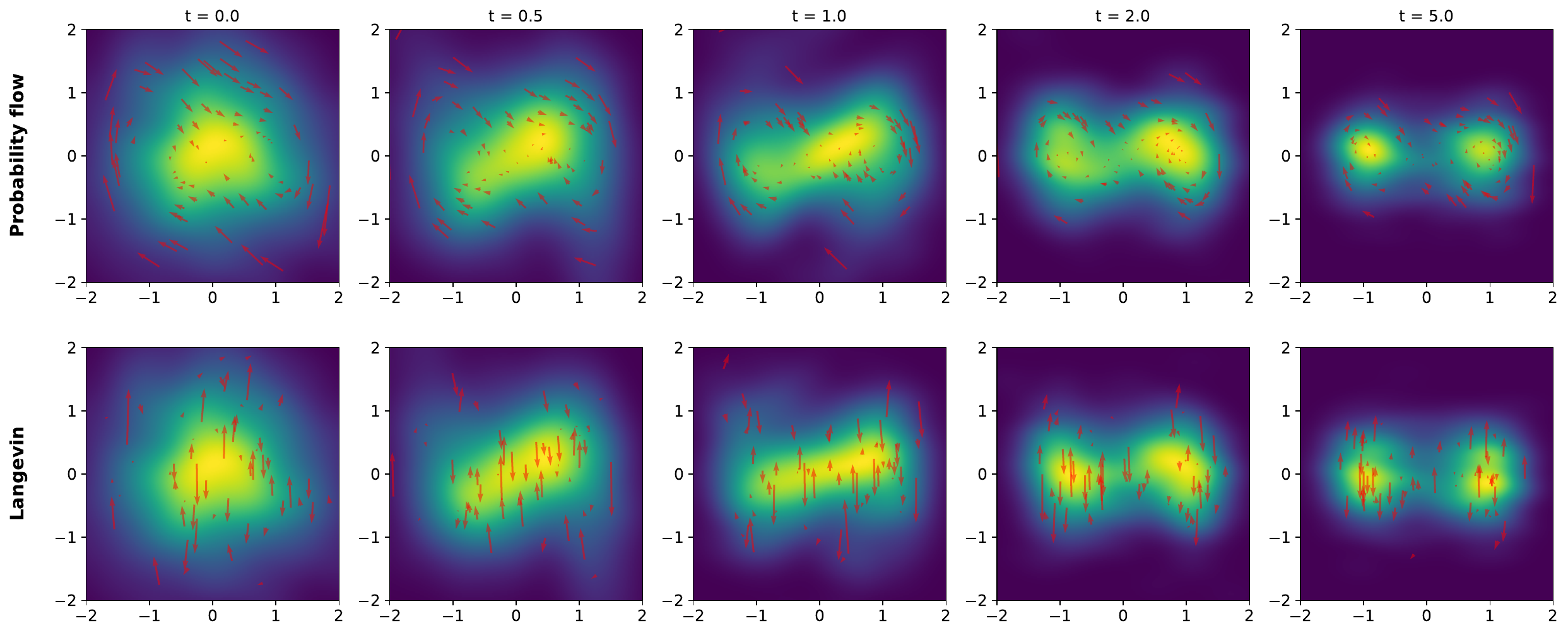}
    \caption{Density plot and velocity field for the Underdamped Langevin dynamic with double-well potential. First row: probability flow for deterministic dynamic with trained velocity field. Second row: stochastic dynamic \eqref{eq:ULD}.}
    \label{fig:ULDDW}
\end{figure}

\subsection{Chaotic systems}\label{sec:chaotic}
We present numerical results for three chaotic systems in this section, including the Lorenz system, the Arctangent Lorenz system, and the stochastic van der Pol oscillator. These chaotic systems have been intensively studied \cite{brunton2016discovering}. In all three systems, we add a diffusion with $\ve=0.1$.

\subsubsection{Lorenz system}
The Lorenz 63 system
\begin{equation}\label{eq:Lorenz63}
\left\{\begin{aligned}
\pt x_t=& \sigma(y_t-x_t),\\
\pt y_t=& x_t(\rho-z_t)-y_t,\\
\pt z_t=& x_ty_t-\beta z_t,
\end{aligned}\right.
\end{equation}
was first introduced by Edward Lorenz for modeling atmospheric convection \cite{lorenz2017deterministic}, with commonly used parameters $\sigma=10$, $\rho=28$, and $\beta=\frac83$. We implement a scaling (with parameter $s=0.2$) to the system \eqref{eq:Lorenz63} while preserving its chaotic behavior. In real world modeling, measurement noise or unpredictable external forcing is usually unavoidable. To reflect this, we add independent Brownian noise to each coordinate of the system with noise amplitude $\ve=0.1$. The resulting scaled stochastic Lorenz system in $\RR^3$ is
\begin{equation}\label{eq:Lorenz}
\left\{\begin{aligned}
\rd x_t=& \sigma(y_t-x_t) \,\rd t + \sqrt{2\ve} \,\rd W^x_t ,\\
\rd y_t=& \parentheses{x_t(\rho-z_t/s)-y_t} \,\rd t + \sqrt{2\ve} \,\rd W^y_t,\\
\rd z_t=& \parentheses{x_t y_t/s - \beta z_t} \,\rd t + \sqrt{2\ve} \,\rd W^z_t.
\end{aligned}\right.
\end{equation}
Here, $W^x_t$, $W^y_t$, and $W^z_t$ are independent standard Brownian motions for $x$, $y$, and $z$ coordinates respectively. We remark that when $\ve=0$, i.e., in the absence of noise, the scaled Lorenz system behaves identically to the original unscaled dynamics \eqref{eq:Lorenz63}. The scaling confines the trajectories within a more compact domain, which helps stabilize training by preventing divergence in the loss.

The training time is $2765$ seconds. The numerical results are shown in Figure
\ref{fig:Lorenz}. Density plots and the corresponding velocity fields are shown at selected time points $t=0,0.3,0.6,1.0,2.0,5.0$, chosen to highlight key stages of the dynamical evolution. At the terminal time $T=5$, the system appears to approach a stationary regime. The projections of the learned deterministic probability flow and the reference Langevin dynamics onto the $x$-$y$, $x$-$z$, and $y$-$z$ planes are shown in rows $1$-$2$, rows $3$-$4$, and rows $5$-$6$ of Figure \ref{fig:Lorenz}, respectively. We observe that the deterministic probability flow correctly characterizes the density evolution of the stochastic Lorenz dynamic. Additionally, the velocity field for the deterministic dynamic is more organized compared with the stochastic dynamic, reflecting the absence of diffusion.

\begin{figure}[!htbp]
\centering
\includegraphics[width=\linewidth]{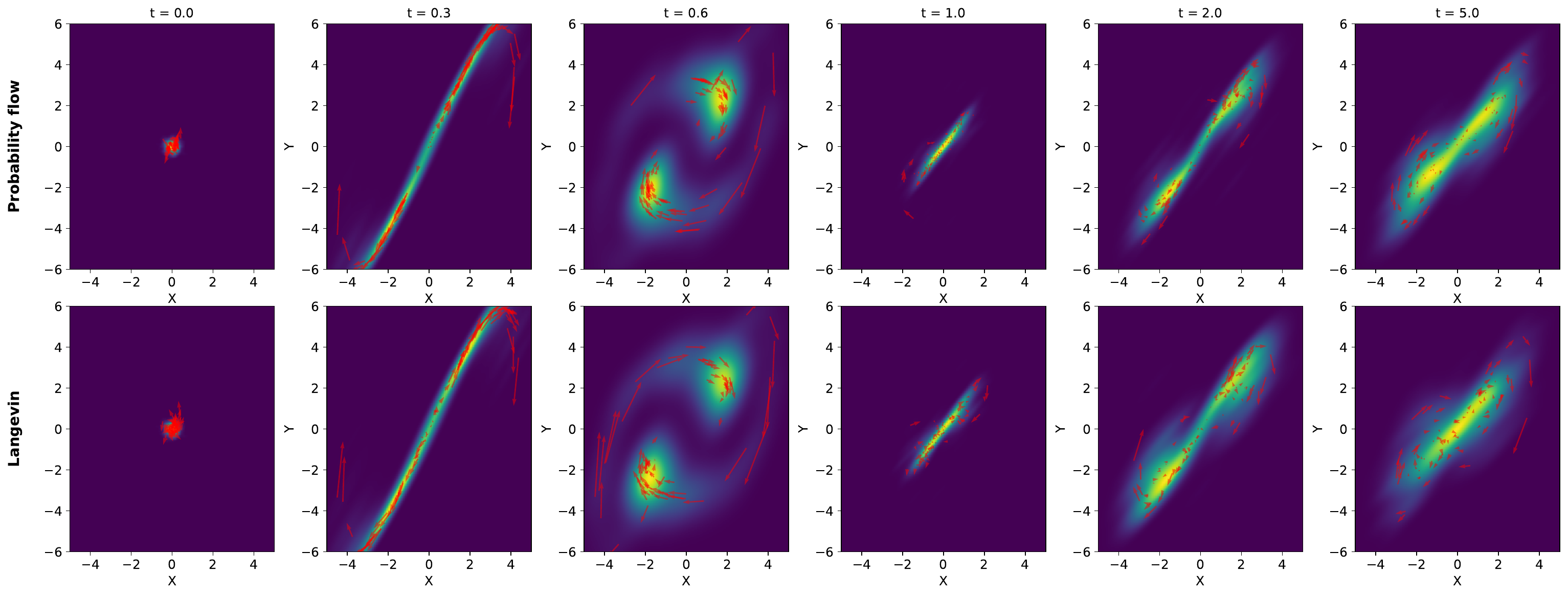}
\includegraphics[width=\linewidth]{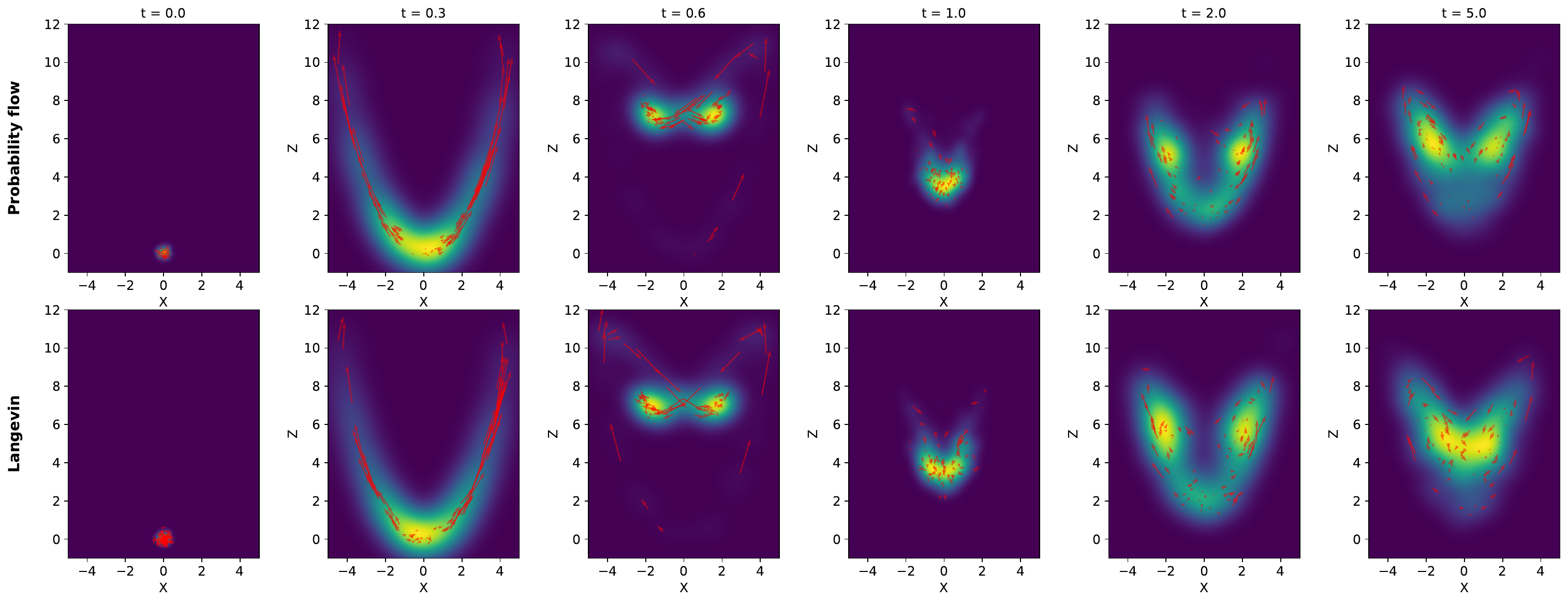}
\includegraphics[width=\linewidth]{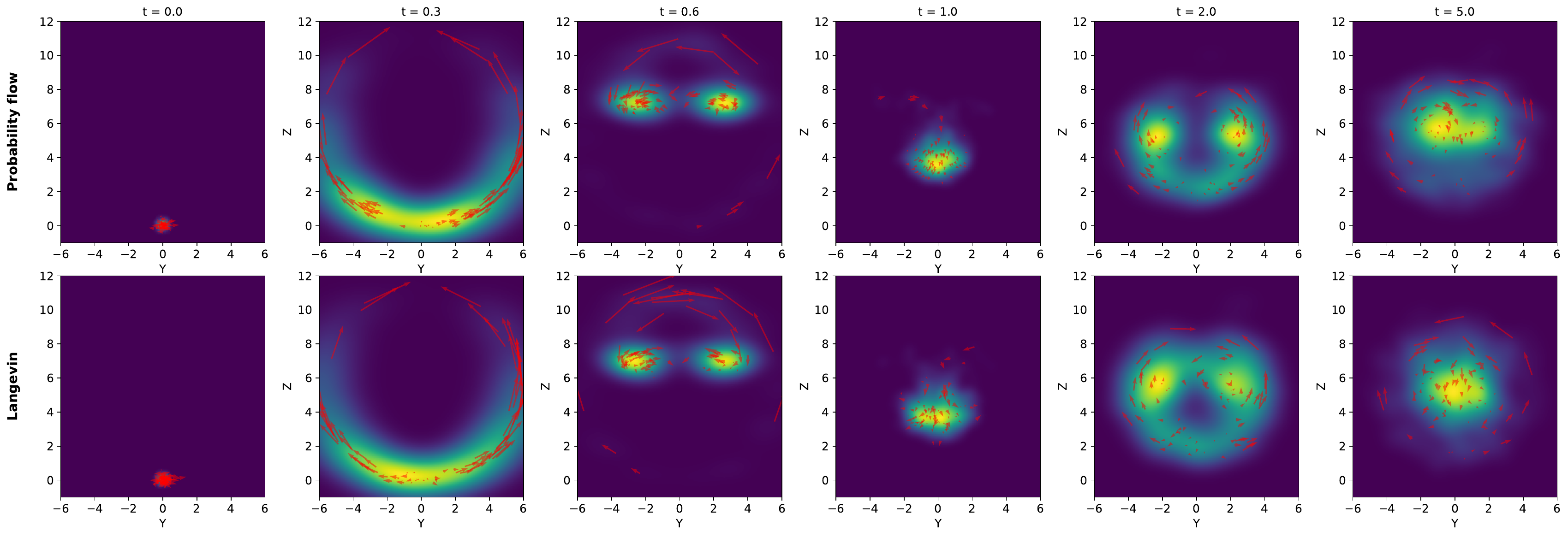}
\caption{Density plot and velocity field for the stochastic Lorenz system. Rows 1, 3, 5: projection of the deterministic dynamic under trained velocity field into $x$-$y$, $x$-$z$, and $y$-$z$ planes. Rows 2, 4, 6: projection of the stochastic dynamic \eqref{eq:Lorenz} into $x$-$y$, $x$-$z$, and $y$-$z$ planes.}
\label{fig:Lorenz}
\end{figure}

\subsubsection{Arctangent Lorenz system}
The Arctangent Lorenz system is studied in \cite{yang2023optimal}, where they have a scaling parameter $50$. We scale their system by $s=0.1$ and add a diffusion process as noise. Similar to the Lorenz example, we obtain the scaled stochastic Arctangent Lorenz system in $\RR^3$.
\begin{equation}\label{eq:AtanLorenz}
\left\{\begin{aligned}
\rd x_t=& 50s \arctan\parentheses{\sigma(y-x)/(50s)} \,\rd t + \sqrt{2\ve} \,\rd W^x_t ,\\
\rd y_t=& 50s \arctan\parentheses{x_t(\rho-50s z_t)- 50s y_t} \,\rd t + \sqrt{2\ve} \,\rd W^y_t,\\
\rd z_t=& 50s \arctan\parentheses{(x_t y_t/s - \beta z_t)/(50s)} \,\rd t + \sqrt{2\ve} \,\rd W^z_t.
\end{aligned}\right.
\end{equation}
The parameters are still $\sigma=10$, $\rho=28$, and $\beta=\frac83$, and the noise level is $\ve=0.1$.

The training time is $3205$ seconds. The numerical results at time stamps $t=0,0.5,0.7,1.0,2.0,5.0$ are presented in Figure \ref{fig:AtanLorenz}, whose layout is the same as Figure \ref{fig:Lorenz}. Similar to the Lorenz system, our trained velocity field $f_\theta$ is able to capture the stochastic process \eqref{eq:AtanLorenz} using the deterministic dynamic, which is validated by the density plot. In addition, the deterministic dynamic demonstrates a more structured velocity field, compared with the stochastic Langevin dynamic.

\begin{figure}[!htbp]
\centering
\includegraphics[width=\linewidth]{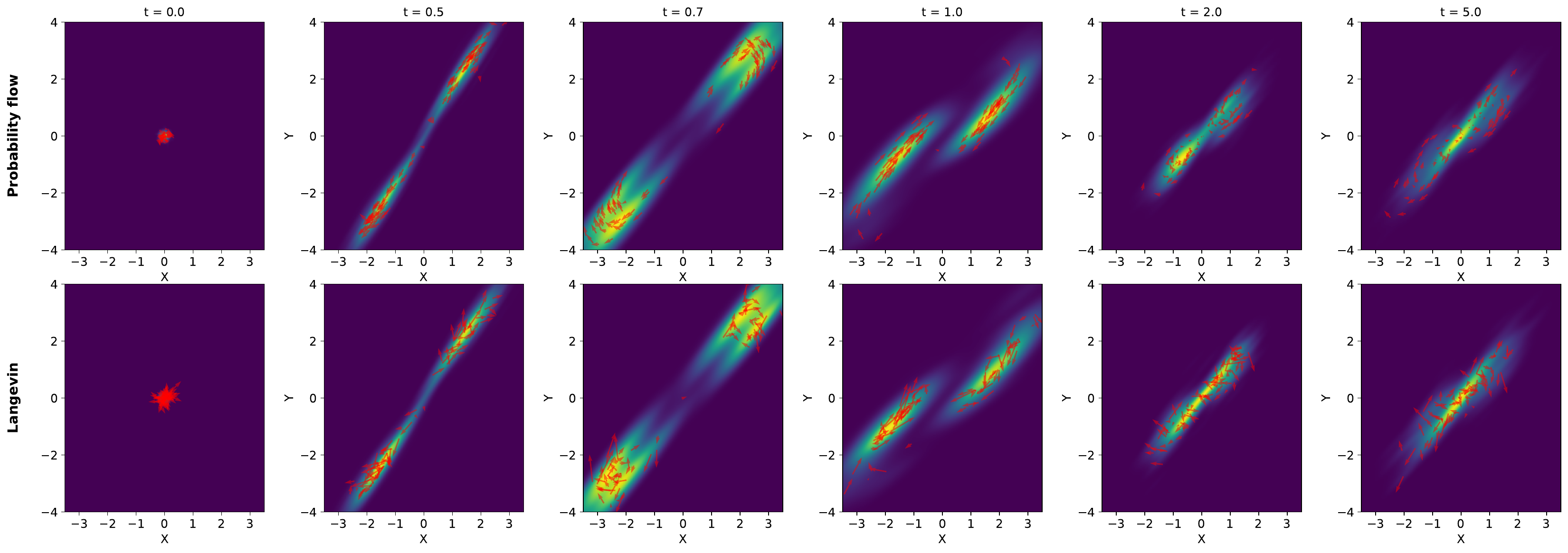}
\includegraphics[width=\linewidth]{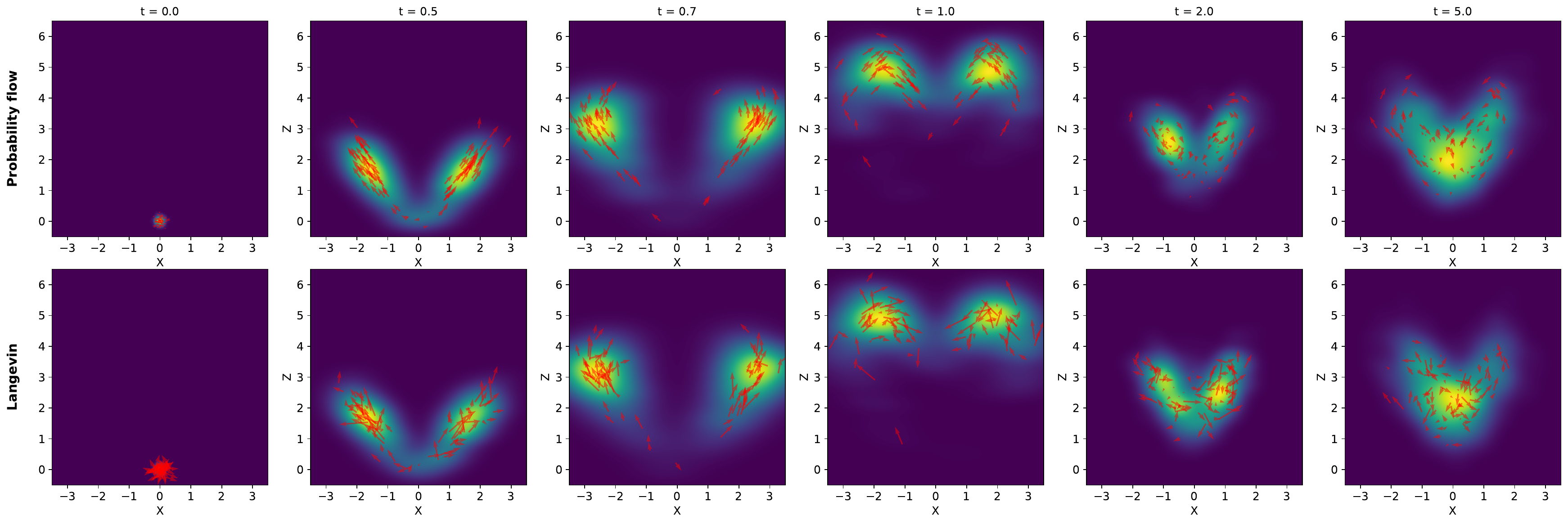}
\includegraphics[width=\linewidth]{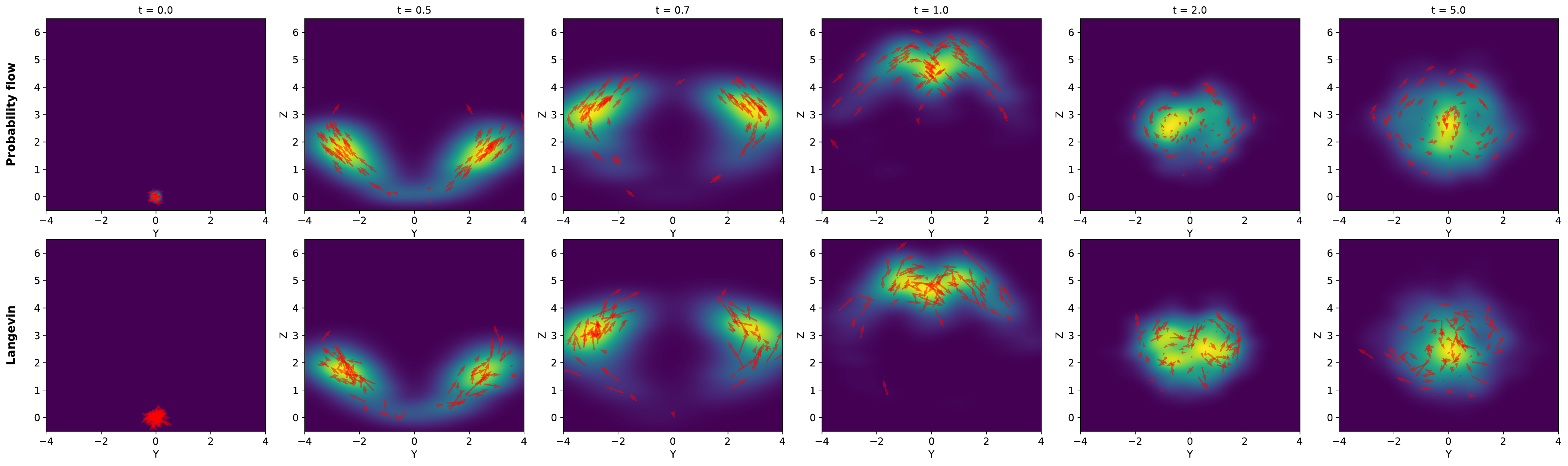}
\caption{Density plot and velocity field for the stochastic Arctangent Lorenz system. Rows 1, 3, 5: projection of the deterministic dynamic under trained velocity field into $x$-$y$, $x$-$z$, and $y$-$z$ planes. Rows 2, 4, 6: projection of the stochastic dynamic \eqref{eq:AtanLorenz} into $x$-$y$, $x$-$z$, and $y$-$z$ planes.}
\label{fig:AtanLorenz}
\end{figure}

\subsubsection{Stochastic van der Pol oscillator}\label{sec:VDP}
The van der Pol oscillator was first invented in the 1920s to model electrical circuits containing vacuum tubes. Then it has become one of the most studied nonlinear oscillator. A stochastic version of van der Pol oscillator is studied in \cite{xu2011stochastic,zhou2021actor}. In this work, we consider the stochastic van der Pol oscillator
\begin{equation}\label{eq:VDP_stochastic}
\begin{cases}
\rd x_t = v_t \,\rd t + \sqrt{2\ve} \,\rd W_t^x\\
\rd v_t = \mu (1-x_t^2) v_t \,\rd t + \sqrt{2\ve} \,\rd W_t^v.
\end{cases}
\end{equation}
with a scalar parameter $\mu=2.0$, and a diffusion parameter $\ve=0.1$ in $\RR^2$. The training time is $1405$ seconds. The numerical results are presented in Figure \ref{fig:VDP}, where we pick the time stamps $t=0.0, 0.2, 0.5, 1.0, 2.0$. The deterministic dynamic under the trained neural network correctly captures the density evolution while maintaining an organized velocity field.

\begin{figure}[!htbp]
\centering
\includegraphics[width=\linewidth]{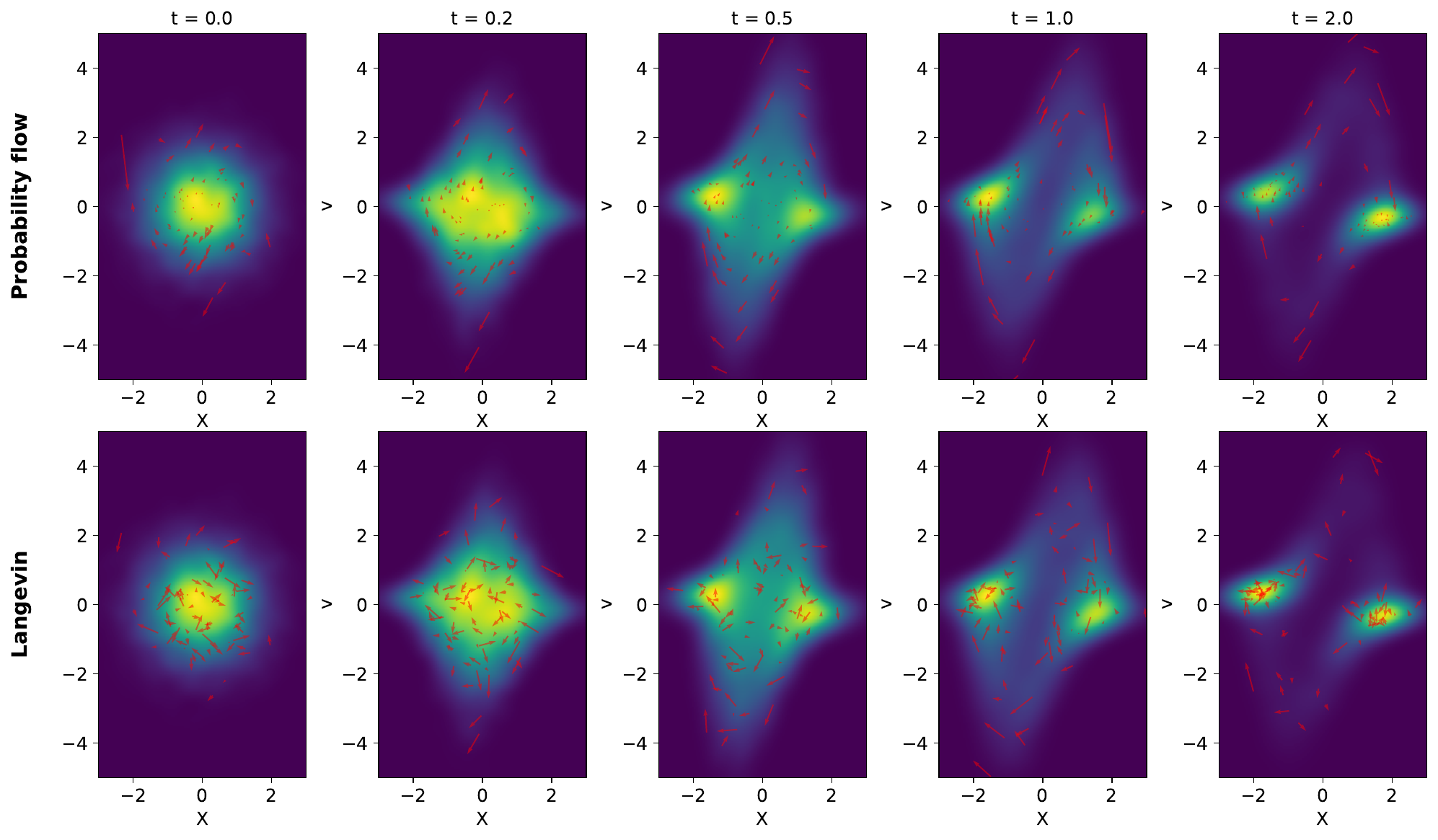}
\caption{Numerical result for stochastic van der Pol dynamics. Row 1: deterministic dynamic under trained velocity field. Row 2: reference stochastic dynamic \eqref{eq:VDP_stochastic}.}
\label{fig:VDP}
\end{figure}

\subsubsection{An active swimmer}
In this section, we consider the active swimmer model studied in \cite{boffi2023probability}. The stochastic dynamics are given by
\begin{equation}\label{eq:Swimmer_stochastic}
\begin{cases}
\rd x_t = (v_t- x_t^3) \,\rd t\\
\rd v_t = -\gamma v_t \,\rd t + \sqrt{2\gamma\ve} \,\rd W_t.
\end{cases}
\end{equation}

\begin{figure}[!ht]
\centering
\includegraphics[width=\linewidth]{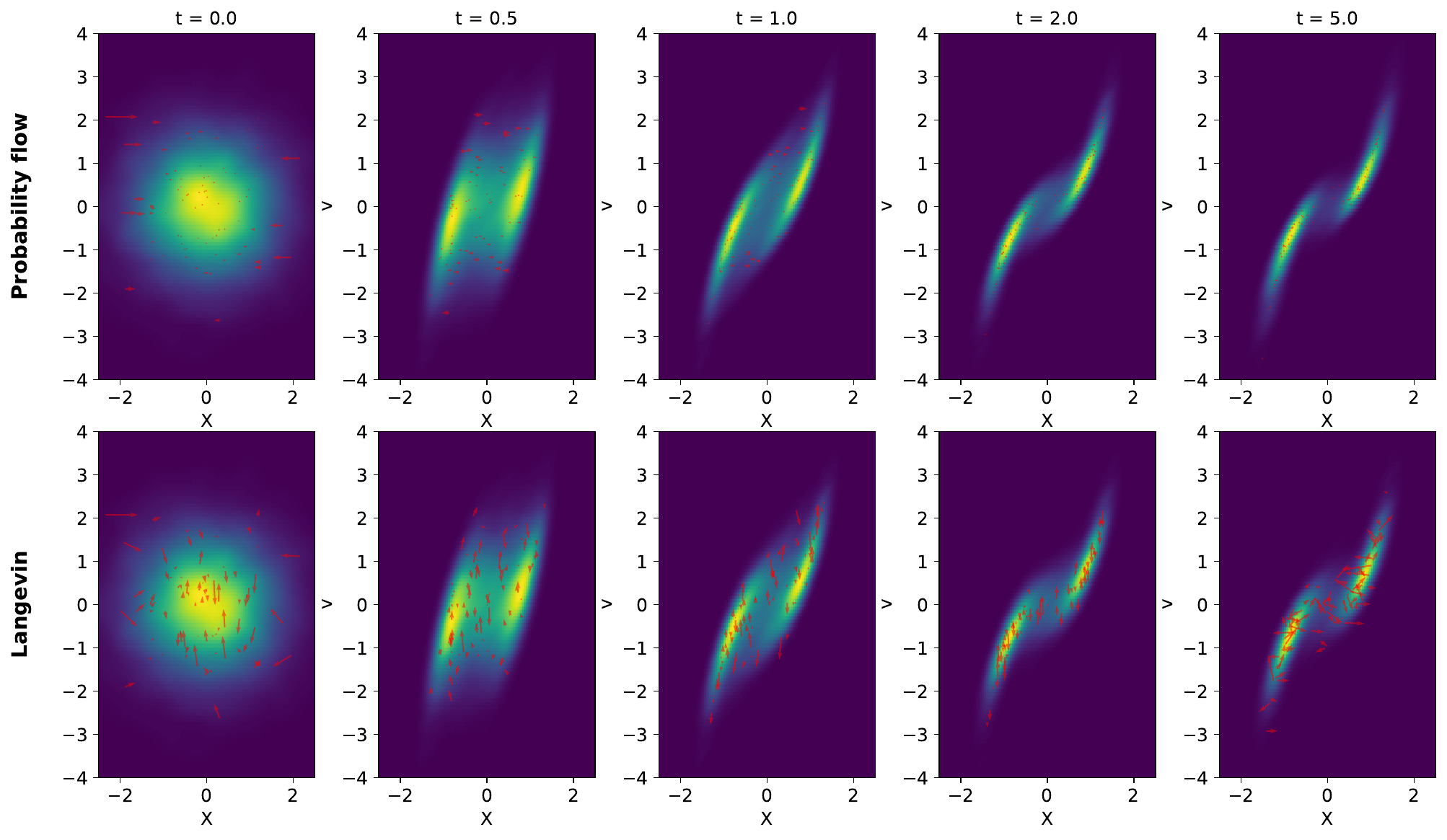}
\caption{Numerical results for the active swimmer model. Top row: deterministic probability flow generated by the learned velocity field. Bottom row: reference stochastic dynamics \eqref{eq:Swimmer_stochastic}.}
\label{fig:Swimmer}
\end{figure}

Figure \ref{fig:Swimmer} shows the numerical results with $\gamma=0.1$, $\ve=1.0$. The training time is $2861$ seconds. The visualization follows the same format as that used for the van der Pol oscillator. The learned deterministic probability flow accurately reproduces the evolution of the stochastic dynamics through the composed velocity field, without explicitly simulating the Brownian noise.

\subsection{Scalability test in high dimensions}
In this subsection, we investigate the performance of the proposed method on high-dimensional Fokker--Planck equations. We first revisit the high-dimensional examples considered in the previous sections to demonstrate the scalability of our algorithm. We then consider the 100-dimensional harmonically interacting particle benchmark studied in \cite{boffi2023probability}.

\subsubsection{Previous high-dimensional examples}

We test our method on $50$-dimensional overdamped Langevin dynamics and $50$-dimensional underdamped Langevin dynamics, with both quadratic and double-well potentials. These experiments are designed to probe the performance of the proposed algorithm in regimes where grid-based discretizations are infeasible due to the curse of dimensionality.

\paragraph{Overdamped Langevin dynamics with $d=50$}
For the quadratic potential, the training time is $31437$ seconds; 
for the double-well potential, the training time is $51822$ seconds. 
For the quadratic case, the errors are
\[
\text{err}_{f} = \num{3.53e-2}, 
\qquad 
\text{err}_{s} = \num{5.30e-2}.
\]
Due to the curse of dimensionality, the density $\rho$ is exponentially small in most regions of the state space, so $\text{err}_{\rho}$ becomes numerically 
ill-conditioned and is therefore not reported in this regime. For the double-well case, we evaluate performance through projected particle distributions, 
as described below.

Figure~\ref{fig:CG50d} shows the numerical results for the quadratic potential. To visualize the learned velocity field in high dimension, we plot one-dimensional slices: the $1$st and $26$th coordinates of the velocity field at $t=0$, where one coordinate of $x$ varies while all other coordinates are fixed at zero. The first row demonstrates that the neural network accurately approximates the composed velocity field. The second row shows the evolution of the free energy and its dissipation, confirming that the learned probability flow preserves the correct thermodynamic structure.
\begin{figure}[!ht]
\centering
\includegraphics[width=0.6\linewidth]{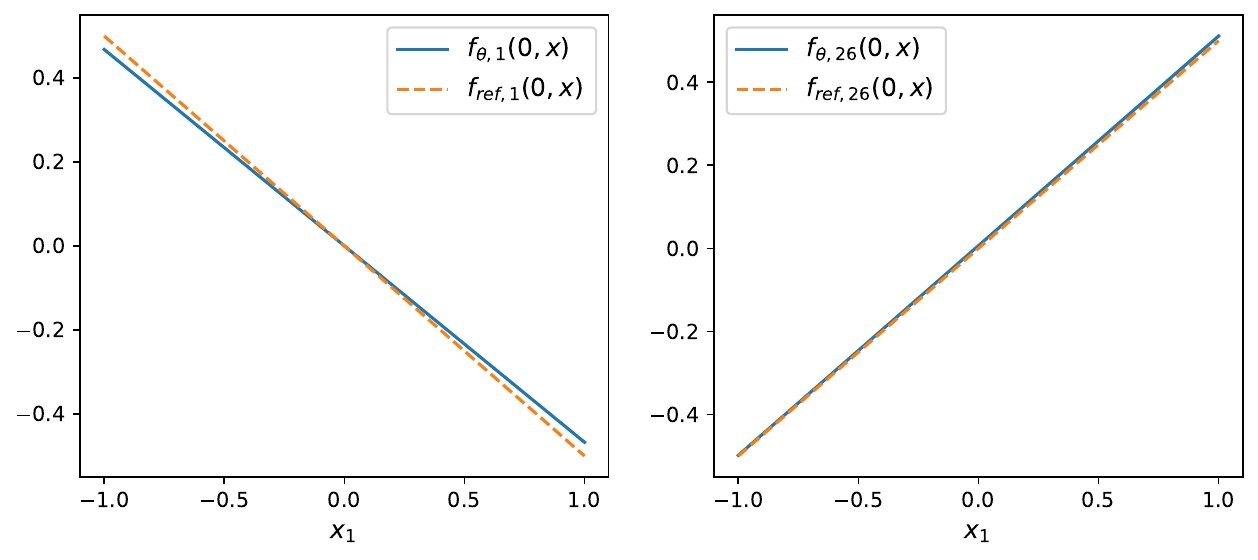}
\includegraphics[width=0.6\linewidth]{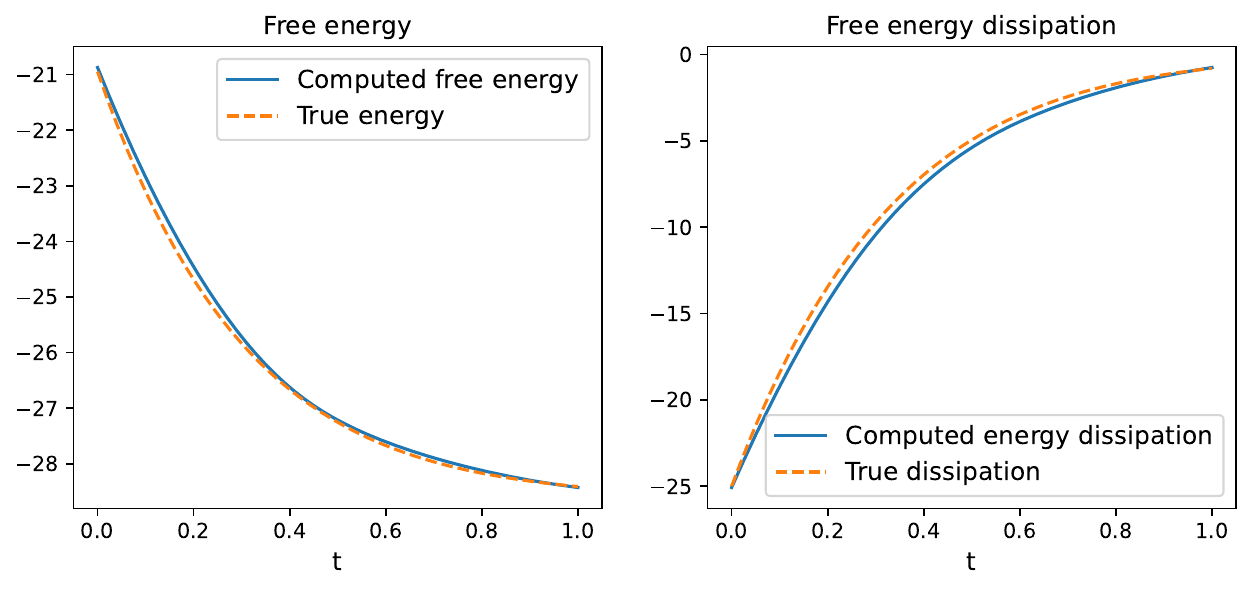}
\caption{Overdamped Langevin dynamics in $d=50$ dimensions with quadratic potential. First row: one-dimensional slices of the learned velocity field at $t=0$. Second row: free energy and its dissipation.}
\label{fig:CG50d}
\end{figure}

For the $50$-dimensional double-well potential, the two wells are located at $c_1 = (1,\dots,1)$ and $c_2 = (-1,\dots,-1)$. To visualize the high-dimensional particle distribution, we project $x \in \RR^{50}$ onto a two-dimensional coordinate system aligned with the well axis. Define
$$u=\tfrac{1}{\sqrt{d}}(1,\dots,1), \qquad x_\parallel = u^\top x, \qquad r_\perp = \| x - x_\parallel u \|.$$
We plot the particle distribution in $(x_\parallel, r_\perp)$ space at terminal time, comparing the learned probability flow with reference Langevin dynamics. As shown in Figure~\ref{fig:DW50d}, the learned flow correctly separates particles into the two metastable wells and matches the reference dynamics.
\begin{figure}[!ht]
\centering
\includegraphics[width=0.6\linewidth]{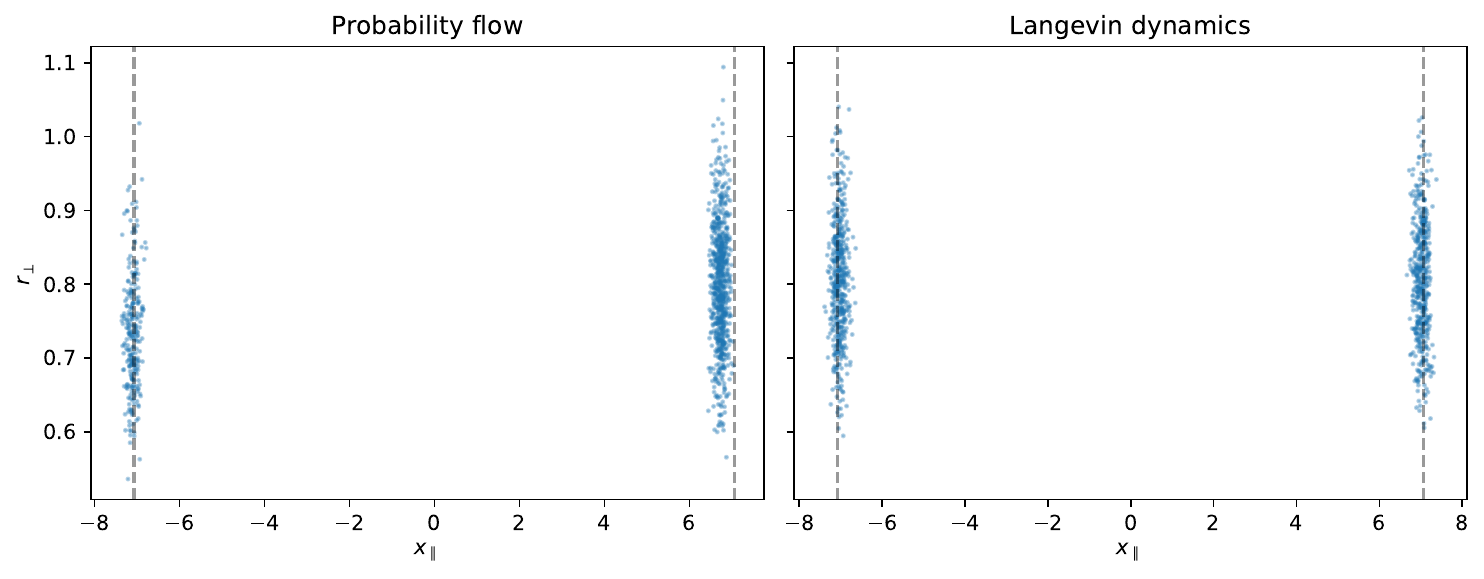}
\caption{Overdamped Langevin dynamics in $d=50$ dimensions with double-well potential. Projected particle distribution in $(x_\parallel, r_\perp)$ coordinates at terminal time.}
\label{fig:DW50d}
\end{figure}

\paragraph{Underdamped Langevin dynamics with $2d=50$}
We next consider underdamped Langevin dynamics with quadratic and double-well potentials, where $(x,v) \in \RR^{25} \times \RR^{25}$ so that the total phase-space dimension is $50$. 
The training times for the quadratic and double-well cases are $24358$ seconds and $26479$ seconds, respectively.
For the quadratic potential, the errors are
\[\text{err}_{f} = \num{8.50e-2}, \qquad \text{err}_{s} = \num{1.01e-1}.\]
Figure~\ref{fig:ULDGaussain50d} shows one-dimensional slices of the learned velocity component $f_v$ at $t=0$: the first coordinate of $f_v$ is plotted as a function of $x_1$ (left) and $v_1$ (right), with all other coordinates fixed at zero. The second row presents the free energy and dissipation, demonstrating that the method accurately captures the thermodynamic structure in high-dimensional phase space.
\begin{figure}[!ht]
\centering
\includegraphics[width=0.6\linewidth]{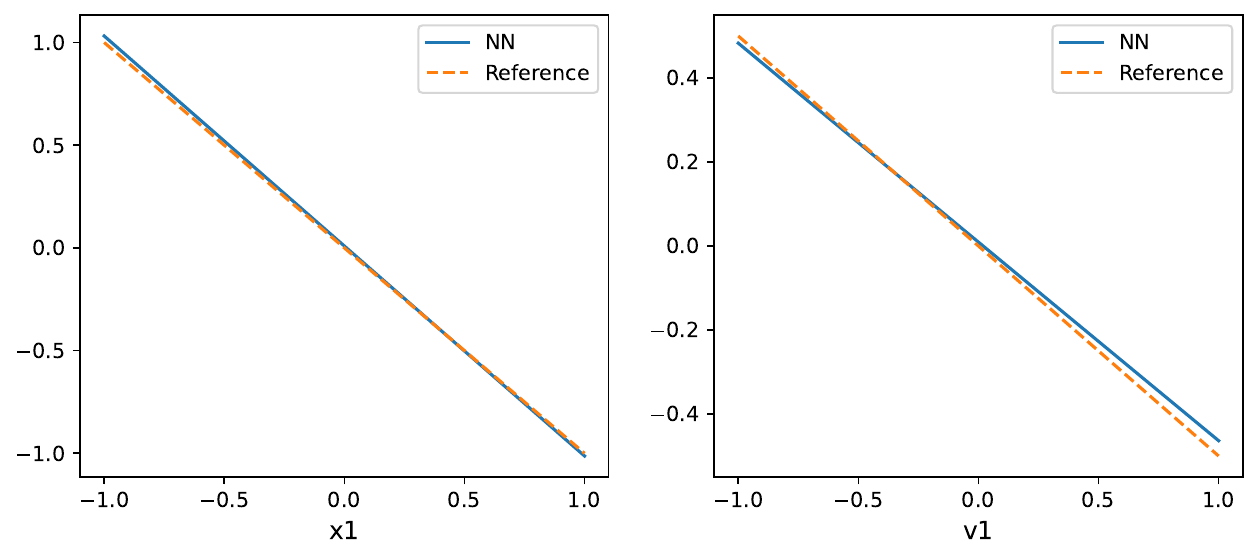}
\includegraphics[width=0.6\linewidth]{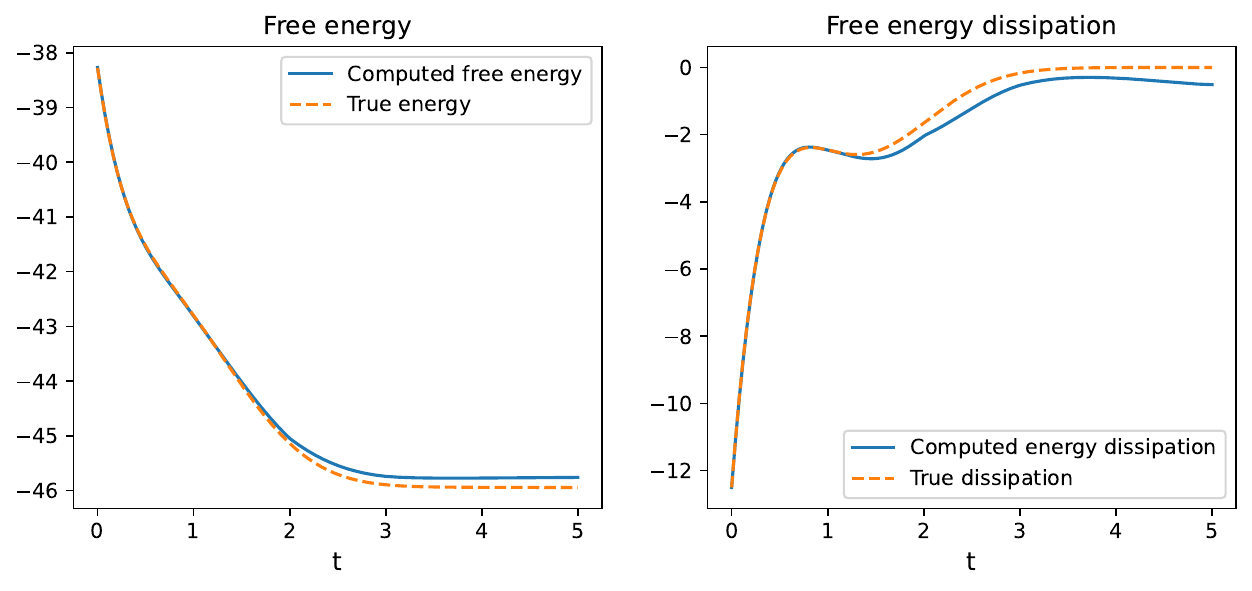}
\caption{Underdamped Langevin dynamics in $2d=50$ dimensions with quadratic potential. First row: slices of the learned velocity field $f_v$ at $t=0$. Second row: free energy and its dissipation.}
\label{fig:ULDGaussain50d}
\end{figure}

For the double-well potential, we again project the terminal distribution of $x_T \in \RR^{25}$ onto $(x_\parallel, r_\perp)$ coordinates as in the overdamped case. Figure~\ref{fig:ULDDW50d} shows that the learned probability flow reproduces the correct bimodal structure and agrees well with the reference underdamped Langevin dynamics. The projected distribution appears less sparse than in the $d=50$ overdamped case, since the configuration space dimension is $25$, mitigating the severity of the 
curse of dimensionality.
\begin{figure}[!ht]
\centering
\includegraphics[width=0.6\linewidth]{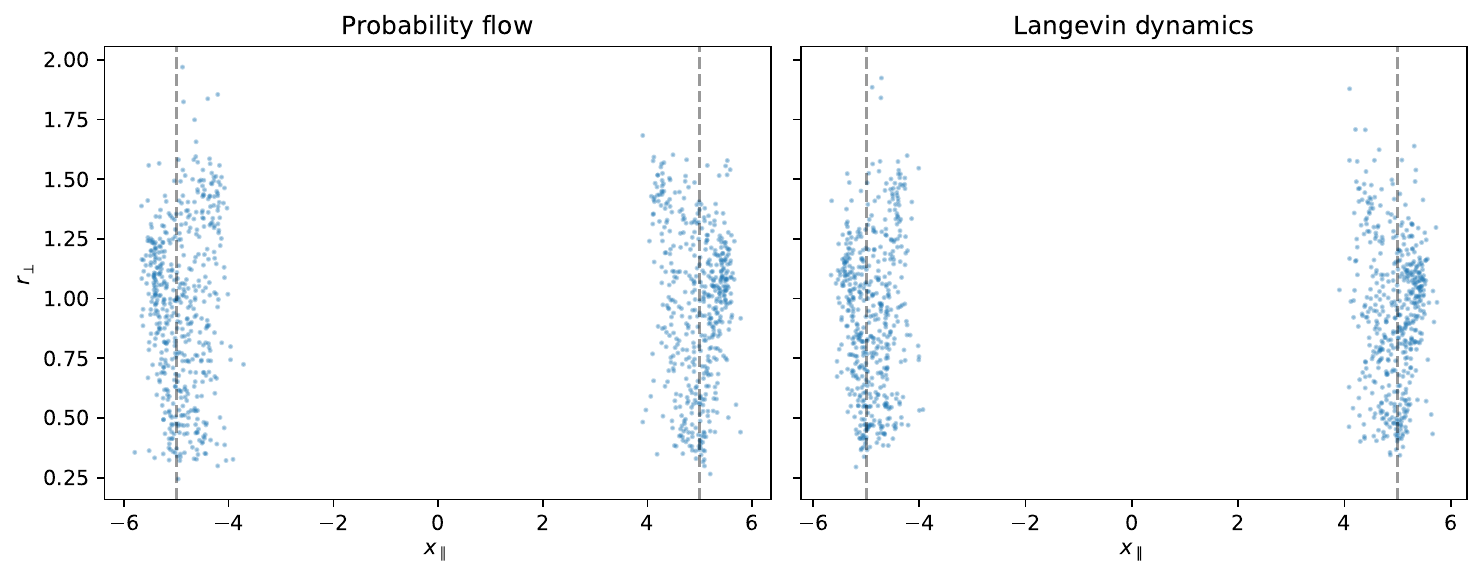}
\caption{Underdamped Langevin dynamics in $2d=50$ dimensions with double-well potential. Projected terminal particle distribution of the probability flow and the reference dynamics.}
\label{fig:ULDDW50d}
\end{figure}

\subsubsection{Harmonically interacting particles}
In this section, we consider the harmonically interacting particle system studied in \cite{boffi2023probability}. The stochastic dynamics of $N$ interacting particles in $\RR^2$ are given by
\begin{equation}
    \rd x_t^{(i)}
    = \Big[\left(\beta_t-x_t^{(i)}\right)
    + \alpha\Big(x_t^{(i)}-\frac1N\sum_{j=1}^N x_t^{(j)}\Big)\Big] \,\rd t
    + \sqrt{2\varepsilon} \,\rd W_t^{(i)},
    \qquad i=1,\ldots,N.
    \label{eq:SDE_particle}
\end{equation}
Here, $\beta_t = (\cos(\pi\omega t), \sin(\pi\omega t))\tp$ is the trap. We use the parameters $N=50,a=2,\omega=1,\alpha=0.5,\varepsilon=0.25$ and the initial points are i.i.d. sampled $x_0^{(i)}\sim \mathcal N(\beta_0,\sigma_0^2 I_2)$ with $\sigma_0=0.5$.
Concatenating all particle coordinates yields a 100-dimensional FP equation, and the joint distribution remains Gaussian $N(m_t, C_t)$. Here, $m_t$ consists of $N$ copies of $e^{-t}\beta_0 + \int_0^te^{s-t} \beta_s \,\rd s$. $C_t = \Sigma_t \otimes I_2$ and
$$\Sigma_t = \big[(\sigma_0^2-\ve)e^{-2t}+\ve\big] \, \frac{1}{N} \mathbf{1}\mathbf{1}\tp + \Big[\Big(\sigma_0^2-\frac{\ve}{1-\alpha}\Big) e^{-2(1-\alpha)t} + \frac{\ve}{1-\alpha}\Big] \Big(I_N - \frac{1}{N} \mathbf{1}\mathbf{1}\tp\Big)$$
where $\mathbf{1}\in\RR^N$ is the all-one column vector. Therefore, the analytical solution is available in closed form.

The training time is $88560$ seconds. Our numerical results are comparable to \cite{boffi2023probability}. The corresponding absolute errors are $\text{err}_{f} = \num{1.16e-1}$, $\text{err}_{s} = \num{5.52e-1}$. For comparison with \cite{boffi2023probability}, we additionally report the relative errors of the velocity field and the score function,
\[
\text{err}_{\mathrm{rel},f}=\num{1.38e-3},
\qquad
\text{err}_{\mathrm{rel},s}=\num{6.61e-3},
\]
where the relative errors are defined by
\begin{equation*}
\begin{aligned}
\text{err}_{\mathrm{rel},f} &= \sum_{n=1}^{N_x} \sum_{j=0}^{N_t} \abs{f\parentheses{t_j, x^{(n)}_{t_j};\theta} - f\parentheses{t_j, x^{(n)}_{t_j}}}^2 \Big/ \,\sum_{n=1}^{N_x} \sum_{j=0}^{N_t} \abs{f\parentheses{t_j, x^{(n)}_{t_j}}}^2,\\
\text{err}_{\mathrm{rel},s} &=  \sum_{n=1}^{N_x}\sum_{j=1}^{N_t}  \abs{ s^{(n)}_{t_{j}} - \nx \log \rho\parentheses{t_{j}, x^{(n)}_{t_{j}}}}^2 \Big/ \,\sum_{n=1}^{N_x}\sum_{j=1}^{N_t}  \abs{\nx \log \rho\parentheses{t_{j}, x^{(n)}_{t_{j}}}}^2.
\end{aligned}
\end{equation*}

\begin{figure}[!ht]
\centering
\includegraphics[width=0.8\linewidth]{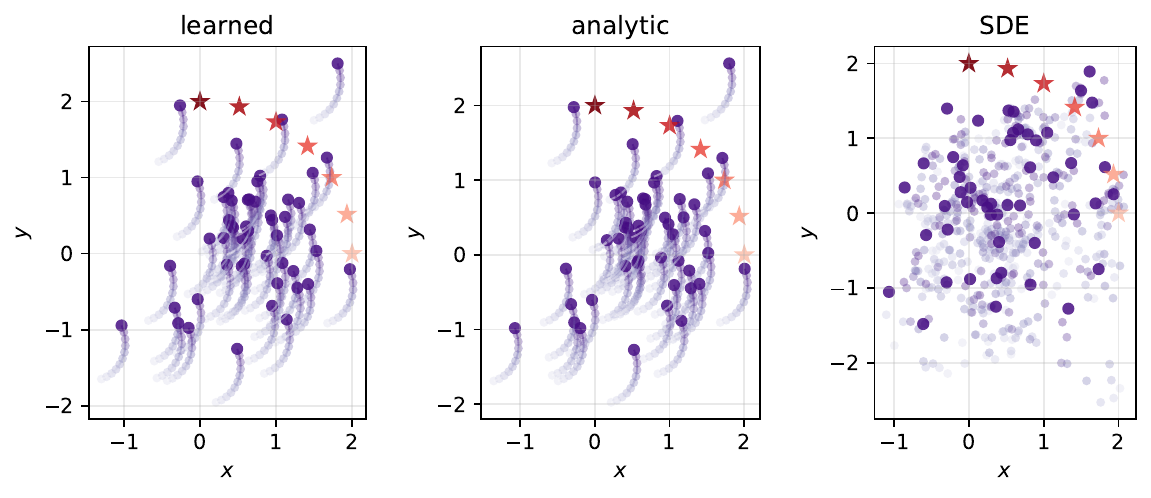}
\includegraphics[width=0.8\linewidth]{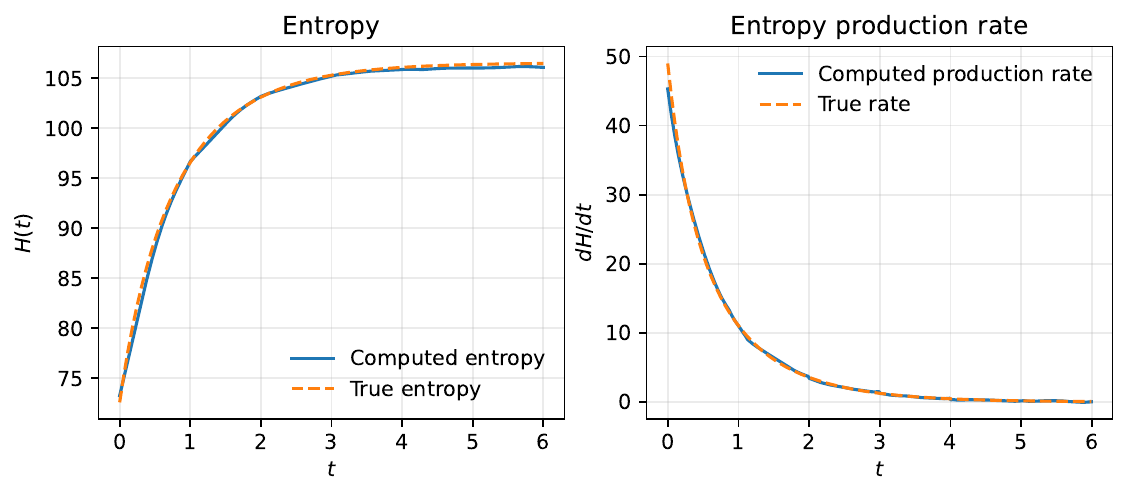}
\caption{$100$ dimensional FP equation for harmonically interacting particles. First row: the learned, analytic, and stochastic particles trajectories for $t \in [2.0, 2.5]$. Second row: the learned and analytical entropy and its production rate.}
\label{fig:Harmonic100d}
\end{figure}

Figure~\ref{fig:Harmonic100d} (top row) compares the trajectories of representative two-dimensional particles generated by the learned probability flow, the analytical probability flow, and the reference SDE. The red stars denote the trajectory of the trap $\beta_t$. The learned trajectories closely match the analytical probability flow and exhibit smooth deterministic evolution, in contrast to the stochastic trajectories generated by the SDE. 
The second row compares the computed entropy $H_t = - \int_{\RR^d} \log\rho(t,x) \, \rho(t,x) \, \rd x$ and its production rate $\pt H_t$ with their analytical counterparts. In the numerical experiments, we estimate the entropy and its production rate through
$$\widehat H(t_j) = -\frac1{N_x}\sum_{n=1}^{N_x}\ell_{t_j}^{(n)}, \qquad \widehat{\pt H_t}(t_j) =\frac1{N_x}\sum_{n=1}^{N_x} \nabla_x\cdot f_\theta(t_j,x_{t_j}^{(n)}),$$
where the second identity follows from
$$\pt H_t = - \pt \,\EE[\log\rho(t,x_t)] = -\EE[\pt \log\rho(t,x_t)] = \EE[\nx \cdot f(t,x_t)].$$
The second row of Figure~\ref{fig:Harmonic100d} demonstrates that the proposed score-based probability flow accurately reproduces both the entropy and its production rate.

\begin{figure}[!ht]
\centering
\includegraphics[width=\linewidth]{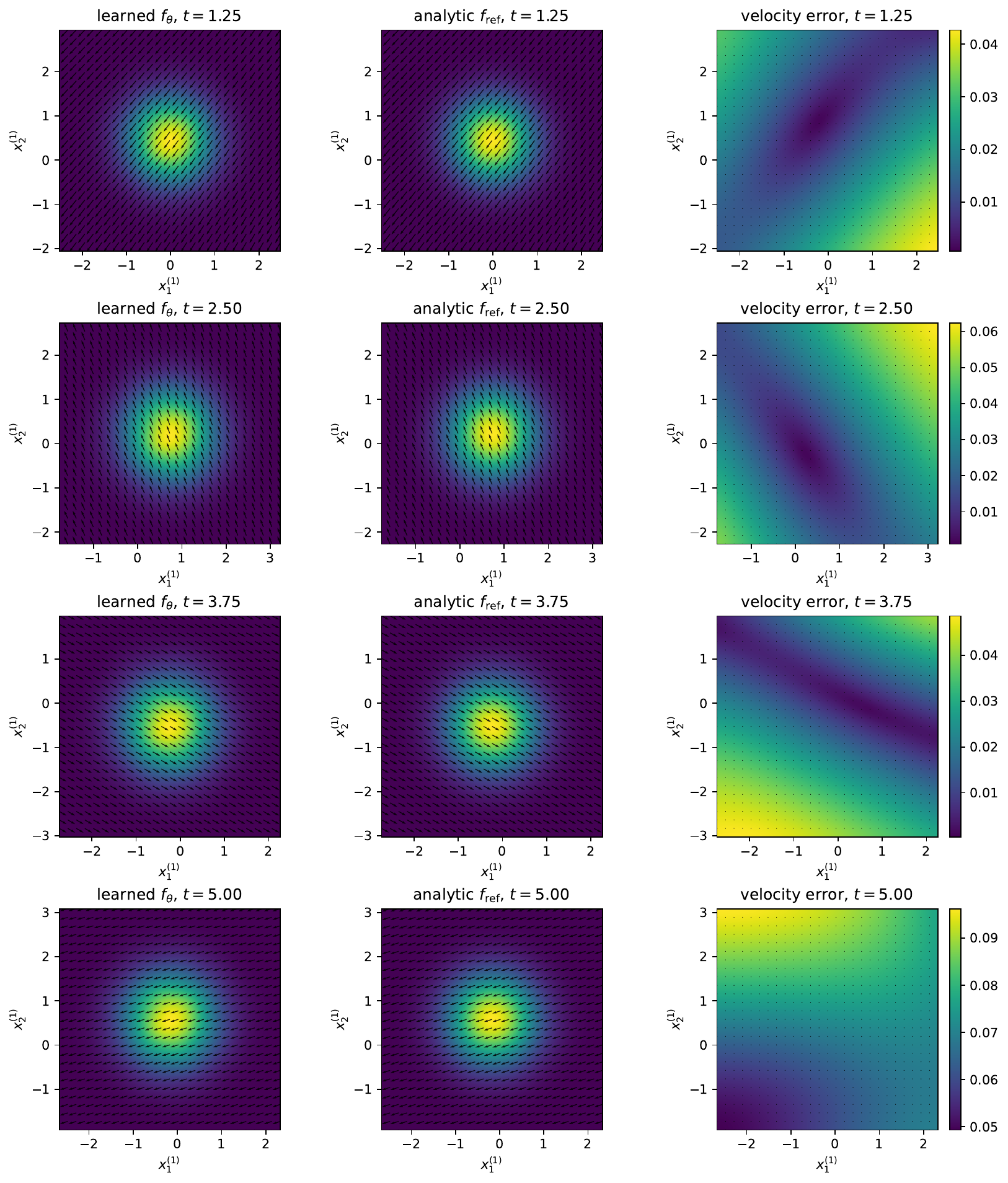}
\caption{First column: the learned velocity field $f$ and the marginal density estimation for $x^{(1)}_t$ at $t=1.25,2.50,3.75,5.00$. Second column: the analytical velocity field and density function. Third column: the error vector field and its magnitude.}
\label{fig:Harmonic_velocity}
\end{figure}

Figure~\ref{fig:Harmonic_velocity} compares the learned and analytical velocity fields at $t=1.25, 2.50, 3.75, 5.00$. The first column shows the learned velocity field of the first particle $x^{(1)}_t$, together with its marginal density in the background. Each velocity field is plotted as a function of a single particle’s coordinate (denoted as $x^{(1)}_1$ and $x^{(1)}_2$), while all other particles are fixed at their mean values given by $m_t$. The background color represents the marginal density, estimated by kernel density estimation from samples generated by the learned probability flow. The second column shows the analytical reference solution. The third column visualizes the error vector field, with the background color indicating its magnitude. The learned velocity field agrees closely with the analytical composed velocity field across all four time instances, with only small approximation errors.

We remark that, unlike \cite{boffi2023probability}, our algorithm optimizes the entire trajectory rather than minimizing the local truncation error at each time step. On this benchmark, our method achieves higher accuracy than \cite{boffi2023probability}, while requiring higher computational cost.

\section{Convergence analysis}\label{sec:convergence}
The numerical experiments in Section \ref{sec:results} demonstrate that the proposed MFC formulation combined with score-based normalizing flows can accurately approximate the density evolution and free-energy dissipation. In this section, we complement these empirical results with a theoretical analysis in a tractable setting. We isolate the core optimization structure induced by the MFC velocity-matching formulation, and leave the analysis for the full nonlinear neural-network parameterization for future research.
To this end, we present a convergence analysis for the flow matching problem in the canonical case of the Ornstein--Uhlenbeck (OU) process. For convergence analysis under more general settings, we refer the readers to \cite{han2020convergence,gu2021mean,carmona2021convergence,carmona2022convergence,zhou2023policy,sethi2024entropy,zhou2024solving,ma2024convergence}. Consider the OU process in $\RR^d$
$$\rd x_t = b(x_t) \,\rd t + \sqrt{2\gamma}\,\rd W_t$$
with $x_0 \sim N(\mu_0, \Sigma_0)$. Here $b(x) = B_1 x + b_0$, $B_1 \in \RR^{d\times d}$, $b_0 \in \RR^d$. The FP equation is 
$$0 = \pt \rho + \nx\cdot(\rho \, b(x)) - \gamma \Delta_x \rho = \pt \rho + \nx\cdot( \rho (b(x) - \gamma \nx \log\rho) ).$$
We consider the linear parametrization $f(t,x;\theta) = \Theta_1(t) x + \theta_0(t)$, where $\Theta_1: [0,T] \to \RR^{d\times d}$, $\theta_0: [0,T] \to \RR^d$. We further discretize it into a one-step flow matching problem with forward Euler scheme. The parameter is $\theta = (\theta_0, \Theta_1) \in \RR^d \times \RR^{d \times d}$. The population loss (in expectation) is
\begin{equation}\label{eq:population_loss}
\begin{aligned}
L(\theta) &= \EE_{x\sim N(\mu_0,\Sigma_0)} \sqbra{\abs{f(x;\theta) - b(x) + \gamma s(x)}^2} \\
& = \EE_{x\sim N(\mu_0,\Sigma_0)} \sqbra{\abs{\Theta_1 x + \theta_0 - (B_1 x + b_0) - \gamma \Sigma_0^{-1}(x-\mu_0)}^2},
\end{aligned}
\end{equation}
where we omit $\dt=T$ in \eqref{eq:loss_discrete}. Here, $s(x) = \nx \log\rho(0,x)=-\Sigma_0^{-1}(x-\mu_0)$ is the score function at $t=0$. With $N_x$ samples $\{x^{(n)}\}_{n=1}^{N_x}$ drawn from $\rho_0$, the empirical loss (with finite samples) is
\begin{equation*}
\wh{L}\parentheses{\theta, \{x^{(n)}\}_{n=1}^{N_x}} = \dfrac{1}{N_x} \sum_{n=1}^{N_x} \abs{\Theta_1 x^{(n)} + \theta_0 - (B_1 x^{(n)} + b_0) - \gamma \Sigma_0^{-1}(x^{(n)}-\mu_0)}^2.
\end{equation*}
We denote $\sigma_0$ is smallest eigenvalue of $\Sigma_0$, and
\begin{equation}\label{eq:lam_0}
\lam_0 = \frac12\parentheses{ 1 + \sigma_0 + |\mu_0|^2 - \sqrt{\parentheses{1 + \sigma_0 + |\mu_0|^2}^2 -4\sigma_0}}.
\end{equation}
Then, the gradient descent algorithm
$$\theta^{(k+1)} = \theta^{(k)} - \eta \nt \wh{L}\parentheses{\theta^{(k)}, \{x^{(n)}\}_{n=1}^{N_x}}$$
has the following convergence property.
\begin{theorem}\label{thm:convergnce}
Let $\ve > 0$ and $\delta \in (0,1)$. Let the number of samples $N_x = \Omega( \lam_0^{-2} (d - \log(\delta)))$. Assume the step size for gradient descent satisfies $$\eta \le \frac43/(8(1+|\mu_0|^2)+4\norm{\Sigma_0}_2 + \lam_0).$$ Then, the gradient descent method on the empirical loss satisfies
$$\mathrm{Pr}\parentheses{\wh{L}\parentheses{\theta^{(K)}, \{x^{(n)}\}_{n=1}^{N_x}} \le \ve} \ge 1 - \delta$$ 
after $K = \Omega(\log(\ve^{-1})(\eta\lam_0)^{-1})$ steps.
\end{theorem}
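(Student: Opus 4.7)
The plan is to exploit the fact that the loss is a quadratic form in $\theta$, reducing the theorem to a spectral statement about the sample Hessian plus a matrix concentration argument. First I would introduce the residual $r(x;\theta) := \Theta_1 x + \theta_0 - (B_1 x + b_0) - \gamma \Sigma_0^{-1}(x-\mu_0)$ and the augmented state $\wt x := (1, x\tp)\tp \in \RR^{d+1}$, so that the Jacobian of $r$ in $\theta = (\theta_0, \Theta_1)$ factors as $J(x) = \wt x\tp \otimes I_d$ and $J(x)\tp J(x) = (\wt x \wt x\tp)\otimes I_d$. The empirical Hessian then decomposes cleanly as $\wh H = 2\wh N \otimes I_d$ with $\wh N = \frac{1}{N_x}\sum_n \wt x^{(n)}(\wt x^{(n)})\tp$, and the population Hessian is $H_\infty = 2N \otimes I_d$ with $N = \EE[\wt x\wt x\tp]$. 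Since $\theta^* := (b_0 - \gamma \Sigma_0^{-1}\mu_0,\, B_1 + \gamma \Sigma_0^{-1})$ annihilates $r$ pointwise, the loss reduces to $\wh L(\theta) = \tfrac12 (\theta-\theta^*)\tp \wh H (\theta-\theta^*)$ and gradient descent becomes the linear recursion $\theta^{(k+1)}-\theta^* = (I-\eta \wh H)(\theta^{(k)}-\theta^*)$.

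Next, I would reduce convergence to two spectral estimates on $\wh H$. The contraction bound $\norm{I-\eta\wh H}_2 \le 1-\eta\lam_{\min}(\wh H)$ holds whenever $\eta\lam_{\max}(\wh H)\le 4/3$, which will follow from the stated step-size hypothesis once we establish an upper bound $\lam_{\max}(\wh H)\le 8(1+|\mu_0|^2)+4\norm{\Sigma_0}_2 + \lam_0$. Iterating the recursion gives $\wh L(\theta^{(K)}) \le \tfrac12 \lam_{\max}(\wh H)(1-\eta\lam_{\min}(\wh H))^{2K}\norm{\theta^{(0)}-\theta^*}^2$, so combined with a lower bound $\lam_{\min}(\wh H)\gtrsim \lam_0$, the claim $\wh L(\theta^{(K)})\le \ve$ after $K=\Omega(\log(\ve^{-1})(\eta\lam_0)^{-1})$ steps falls out immediately.

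The third step identifies $\lam_0$ with the smallest eigenvalue of the population matrix $N$. Since $H_\infty = 2N\otimes I_d$, the spectrum of $H_\infty$ is twice that of $N$ (with multiplicity $d$), so it suffices to analyze the $(d{+}1)\times(d{+}1)$ matrix $N$. A Schur-complement expansion of $\det(N - \lam I)=0$ reduces to $\det\bigl(\Sigma_0 - \lam I - \tfrac{\lam}{1-\lam}\mu_0\mu_0\tp\bigr)=0$, and the matrix determinant lemma together with a monotonicity argument on $\phi(\lam) := \mu_0\tp(\Sigma_0-\lam I)^{-1}\mu_0$ shows that the smallest eigenvalue is attained in the worst case when $\mu_0$ is aligned with the minimal eigendirection of $\Sigma_0$. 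This reduces the problem to the scalar quadratic $\lam^2 - (1+\sigma_0+|\mu_0|^2)\lam + \sigma_0 = 0$, whose smaller root is exactly the $\lam_0$ of \eqref{eq:lam_0}.

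Finally, I would transfer these spectral bounds from $N$ to $\wh N$ via Gaussian sample-moment concentration. Because $x^{(n)}\sim N(\mu_0,\Sigma_0)$, the blocks of $\wh N - N$ involve the deviations $\frac{1}{N_x}\sum_n x^{(n)} - \mu_0$ and $\frac{1}{N_x}\sum_n x^{(n)}(x^{(n)})\tp - (\Sigma_0+\mu_0\mu_0\tp)$, and a matrix Bernstein or sub-Gaussian covariance inequality yields $\norm{\wh N-N}_2 \lesssim \lam_0$ with probability at least $1-\delta$ provided $N_x = \Omega(\lam_0^{-2}(d - \log\delta))$; Weyl's inequality then preserves both $\lam_{\min}(\wh H)\gtrsim \lam_0$ and the specified upper bound on $\lam_{\max}(\wh H)$. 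The main obstacle I anticipate is this concentration step: obtaining the quadratic $\lam_0^{-2}$ dependence tightly while simultaneously producing the explicit constants $8(1+|\mu_0|^2)+4\norm{\Sigma_0}_2$ in the step-size bound, since one must track the mean-block and second-moment-block deviations jointly through the Schur complement rather than bounding them separately, to avoid an inflation that would worsen the stated sample complexity.
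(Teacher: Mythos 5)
Your proposal is correct and follows essentially the same route as the paper's proof: you exploit the exact quadratic structure with the same minimizer $\theta^*$, lower-bound the population Hessian spectrum by the same $\lam_0$ of \eqref{eq:lam_0} (your Schur-complement/secular-equation derivation and the paper's completion of squares against $\sigma_0 I_d+\mu_0\mu_0\tp$ lead to the identical scalar quadratic $\lam^2-(1+\sigma_0+|\mu_0|^2)\lam+\sigma_0=0$), transfer to the empirical Hessian by the same sub-Gaussian covariance concentration with $N_x=\Omega(\lam_0^{-2}(d-\log\delta))$, and conclude linear convergence, the only stylistic differences being your Kronecker/augmented-moment formulation versus the paper's row-wise decomposition and your exact recursion $\theta^{(k+1)}-\theta^*=(I-\eta\wh{H})(\theta^{(k)}-\theta^*)$ versus the paper's PL-inequality/descent-lemma argument on the loss values. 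One minor caveat: the contraction bound $\norm{I-\eta\wh{H}}_2\le 1-\eta\lam_{\min}(\wh{H})$ requires $\eta(\lam_{\min}(\wh{H})+\lam_{\max}(\wh{H}))\le 2$ rather than merely $\eta\lam_{\max}(\wh{H})\le 4/3$, but this is harmless since Weyl's inequality on top of the population bound gives the tighter estimate $\lam_{\max}(\wh{H})\le \frac12\lam_0+4(1+|\mu_0|^2)+2\norm{\Sigma_0}_2$, so the stated step size in fact yields $\eta\lam_{\max}(\wh{H})\le 2/3$.
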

\begin{proof}
\noindent\emph{Step 1.} We characterize the landscape of the population loss \eqref{eq:population_loss}. We take derivative of the population loss and obtain the critical point equations
\begin{align*}
0 &= \partial_{\theta_0} L(\theta) = 2\EE_{x\sim N(\mu_0,\Sigma_0)} \sqbra{\Theta_1 x + \theta_0 - (B_1 x + b_0) - \gamma \Sigma_0^{-1}(x-\mu_0)} \\
0 &= \partial_{\Theta_1} L(\theta) = 2\EE_{x\sim N(\mu_0,\Sigma_0)} \sqbra{\parentheses{\Theta_1 x + \theta_0 - (B_1 x + b_0) - \gamma \Sigma_0^{-1}(x-\mu_0)} x\tp}.
\end{align*}
The solution to this critical point system is the unique minimizer of $L(\theta)$, given by
$$\theta_0^* = b_0 - \gamma \Sigma_0^{-1} \mu_0 \quad \text{and} \quad \Theta_1^* = B_1 + \gamma \Sigma_0^{-1}.$$
Let $\theta_{0,j}$ be the $j$-th element of $\theta_0$ and $\Theta_{1,j}$ be the $j$-th row of $\Theta_1$. Then the population loss can be written as $L(\theta) = \sum_{j=1}^d L_j(\theta)$, where
$$L_j(\theta) = \EE_{x\sim N(\mu_0,\Sigma_0)} \sqbra{ \parentheses{\Theta_{1,j} x + \theta_{0,j} - (B_{1,j} x + b_{0,j}) - \gamma (\Sigma_0^{-1})_{j,:}(x-\mu_0)}^2}$$
only depends on $\theta_{0,j}$ and $\Theta_{1,j}$. Therefore, in order to study the optimization landscape for $L(\theta)$, it is sufficient to study the optimization landscape for $L_j(\theta_{0,j}, \Theta_{1,j})$. Its critical point equations are
\begin{align*}
0 &= \partial_{\theta_{0,j}} L_j = \EE_{x\sim N(\mu_0,\Sigma_0)} \sqbra{\Theta_{1,j} x + \theta_{0,j} - (B_{1,j} x + b_{0,j}) - \gamma (\Sigma_0^{-1})_{j,:}(x-\mu_0)} \\
&= \Theta_{1,j} \mu_0 + \theta_{0,j} - (B_{1,j} \mu_0 + b_{0,j}),
\end{align*}
and
\begin{align*}
0 &= \partial_{\Theta_{1,j}} L_j = \EE_{x\sim N(\mu_0,\Sigma_0)} \sqbra{ \parentheses{\Theta_{1,j} x + \theta_{0,j} - (B_{1,j} x + b_{0,j}) - \gamma (\Sigma_0^{-1})_{j,:}(x-\mu_0)}x\tp} \\
& = \Theta_{1,j} (\Sigma_0 + \mu_0\mu_0\tp) + \theta_{0,j} \mu_0\tp - (B_{1,j} (\Sigma_0 + \mu_0\mu_0\tp) + b_{0,j}\mu_0\tp) - \gamma e_j\tp,
\end{align*}
where $e_j$ is the $j$-th standard unit vector. The Hessian of $L_j$ is
\begin{equation}\label{eq:Hessian_Lj}
\nabla_{\theta_{0,j}, \Theta_{1,j}}^2 L_j(\theta_{0,j}, \Theta_{1,j}) = 2\begin{bmatrix} 
1 & \mu_0\tp \\ \mu_0 & \Sigma_0 + \mu_0\mu_0\tp \end{bmatrix}.
\end{equation}

\noindent\emph{Step 2.} We estimate the Hessian for the population loss. First, we can verify through definition \eqref{eq:lam_0} that $\lam_0 \in (0,1]$. We claim that the smallest eigenvalue of the Hessian \eqref{eq:Hessian_Lj} is larger than or equal to $\lam_0$. If $\mu_0=0$, then $\lam_0 = \min(1,\sigma_0)$, and claim is clear. If $\mu_0\neq0$, then $\lam_0 \in (0,1)$. For any $a\in \RR$ and $b \in \RR^d$, we have
\begin{align*}
& \quad \frac12 [a,b\tp] \, \nabla_{\theta_{0,j}, \Theta_{1,j}}^2 L_j(\theta_{0,j}, \Theta_{1,j}) \begin{bmatrix}a\\b\end{bmatrix} = [a,b\tp]\begin{bmatrix} 
1 & \mu_0\tp \\ \mu_0 & \Sigma_0 + \mu_0\mu_0\tp \end{bmatrix}\begin{bmatrix}a\\b\end{bmatrix} \\
& = a^2 + 2 ab\tp \mu_0 + b\tp (\Sigma_0 + \mu_0\mu_0\tp) b \ge a^2 + 2 ab\tp \mu_0 + b\tp (\sigma_0 I_d + \mu_0\mu_0\tp) b\\
&= \sqbra{(1-\lam_0) a^2 + 2 ab\tp \mu_0 + \dfrac{1}{1-\lam_0} b\tp\mu_0\mu_0\tp b} + \lam_0 a^2 + b\tp \parentheses{\sigma_0 I_d - \dfrac{\lam_0}{1-\lam_0}\mu_0\mu_0\tp}b\\
& \ge \dfrac{1}{1-\lam_0}\parentheses{(1-\lam_0)a + \mu_0\tp b}^2 + \lam_0 a^2 + \parentheses{\sigma_0 - \dfrac{\lam_0}{1-\lam_0} |\mu_0|^2} |b|^2 \ge \lam_0 (a^2 + |b|^2),
\end{align*}
where the last inequality in because $\lam_0 = \sigma_0 - \frac{\lam_0}{1-\lam_0} |\mu_0|^2$. Therefore, $L_j$ is $2\lam_0$-strongly convex in $(\theta_{0,j}, \Theta_{1,j})$, which implies $L(\theta)$ is $2\lam_0$-strongly convex in $(\theta_0,\Theta_1)$. We also have the upper bound
\begin{align*}
& \quad \frac12 [a,b\tp] \, \nabla_{\theta_{0,j}, \Theta_{1,j}}^2 L_j(\theta_{0,j}, \Theta_{1,j}) \begin{bmatrix}a\\b\end{bmatrix}  \\
& = a^2 + 2 ab\tp \mu_0 + b\tp (\Sigma_0 + \mu_0\mu_0\tp) b \\
& \le 2a^2 + (\norm{\Sigma_0}_2 + 2|\mu_0|^2) |b|^2,
\end{align*}
which implies $\norm{\nabla_{\theta_{0,j}, \Theta_{1,j}}^2 L_j}_2 \le 4(1 + |\mu_0|^2) + 2\norm{\Sigma_0}_2$ and $\norm{\nabla_\theta^2 L(\theta)}_2 \le 4(1 + |\mu_0|^2) + 2\norm{\Sigma_0}_2$.

\noindent\emph{Step 3.} We study the optimization landscape for the empirical loss $\wh{L}(\theta, \{x^{(n)}\}_{n=1}^{N_x})$. We denote $\hm := \frac{1}{N_x} \sum\limits_{n=1}^{N_x} x^{(n)}$ and $\hD := \frac{1}{N_x} \sum\limits_{n=1}^{N_x} x^{(n)}x^{(n)\top}$ the empirical mean and second order moments. Similar to \emph{step 2}, we can decompose the loss into $\wh{L}(\theta, \{x^{(n)}\}_{n=1}^{N_x}) = \sum\limits_{j=1}^d \wh{L}_j(\theta, \{x^{(n)}\}_{n=1}^{N_x})$, where
$$\wh{L}_j(\theta,\{x^{(n)}\}_{n=1}^{N_x}) = \frac{1}{N_x} \sum_{n=1}^{N_x} \sqbra{ \parentheses{\Theta_{1,j} x^{(n)} + \theta_{0,j} - (B_{1,j} x^{(n)} + b_{0,j}) - \gamma (\Sigma_0^{-1})_{j,:}(x^{(n)}-\mu_0)}^2},$$
only depends on $\theta_{0,j}$ and $\Theta_{1,j}$. Taking derivatives, we obtain the critical point equations
\begin{align*}
0 &= \partial_{\theta_{0,j}} \wh{L}_j = \frac{2}{N_x} \sum_{n=1}^{N_x} \sqbra{ \Theta_{1,j} x^{(n)} + \theta_{0,j} - (B_{1,j} x^{(n)} + b_{0,j}) - \gamma (\Sigma_0^{-1})_{j,:}(x^{(n)}-\mu_0)} \\
&= \Theta_{1,j} \hm + \theta_{0,j} - (B_{1,j} \hm + b_{0,j})- \gamma (\Sigma_0^{-1})_{j,:}(\hm-\mu_0),
\end{align*}
and
\begin{align*}
0 &= \partial_{\Theta_{1,j}} \wh{L}_j =\frac{2}{N_x} \sum_{n=1}^{N_x} \sqbra{ \Theta_{1,j} x^{(n)} + \theta_{0,j} - (B_{1,j} x^{(n)} + b_{0,j}) - \gamma (\Sigma_0^{-1})_{j,:}(x^{(n)}-\mu_0)}x\tp_n \\
& = \Theta_{1,j} \hD + \theta_{0,j} \hm \tp - (B_{1,j} \hD + b_{0,j}\hm\tp) - \gamma (\Sigma_0^{-1})_{j,:}(\hD-\mu_0\hm\tp).
\end{align*}
The Hessian of $L_j$ is
\begin{equation}\label{eq:Hessian_Lj2}
\nabla_{\theta_{0,j}, \Theta_{1,j}}^2 \wh{L}_j(\theta_{0,j}, \Theta_{1,j}, \{x^{(n)}\}_{n=1}^{N_x}) = 2\begin{bmatrix} 
1 & \hm\tp \\ \hm & \hD \end{bmatrix}.
\end{equation}

We observe that the critical point system has a unique solution $\theta_{0,j}^* = b_{0,j} - \gamma (\Sigma_0^{-1})_{j,:} \mu_0$, $\Theta_{1,j}^* = B_{1,j} + \gamma (\Sigma_0^{-1})_{j,:}$ if and only if 
the Hessian \eqref{eq:Hessian_Lj2} is invertible. Let
$$\hS = \hD - \hm\hm\tp$$
be the empirical estimation for the covariance matrix. Then the invertibility of the Hessian \eqref{eq:Hessian_Lj2} is equivalent to invertibility of $\hS$. When $N \ge d$, $\hS$ is invertible almost surely. In this case, we observe that the minimizer for the empirical loss is the same as the population loss.

Next, we apply the concentration theorem \cite[Theorem 4.6.1]{vershynin2018high}. Let $N_x = \Omega( \lam_0^{-2} (d - \log(\delta)))$. Then, with probability $\ge 1-\delta$, we have
\begin{align*}
& \quad \norm{\nt^2 \wh{L} - \nt^2 L}_2=\norm{ 2\begin{bmatrix} 
1 & \hm\tp \\ \hm & \hD \end{bmatrix} - 2\begin{bmatrix} 
1 & \mu_0\tp \\ \mu_0 & \Sigma_0 + \mu_0\mu_0\tp \end{bmatrix} }_2 \\
& \le C \dfrac{1}{\sqrt{N_x}} \parentheses{\sqrt{d} + \sqrt{-\log\delta}} \le \frac12 \lam_0,
\end{align*}
where the last inequality is because $N_x = \Omega( \lam_0^{-2} (d - \log(\delta)))$. Combining with the estimates in \emph{step 2}, we obtain that 
$$\nt^2 \wh{L} \ge \frac32 \lam_0 I \quad \text{and} \quad \norm{\nt^2 \wh{L}} \le \frac12 \lam_0 + 4(1 + |\mu_0|^2) + 2\norm{\Sigma_0}_2.$$
The first inequality implies that 
$$\abs{\nt\wh{L}(\theta)}^2 \ge 3\lam_0 \parentheses{ \wh{L}(\theta) - \wh{L}(\theta^*)} = 3\lam_0 \wh{L}(\theta),$$
where we omit the input $\{x^{(n)}\}_{n=1}^{N_x}$ in $\wh{L}$.
Therefore, under the gradient descent algorithm
$$\theta^{(k+1)} = \theta^{(k)} - \eta \nt \wh{L}\parentheses{\theta^{(k)}, \{x^{(n)}\}_{n=1}^{N_x}},$$
the loss function satisfies
\begin{align*}
& \quad \wh{L}\parentheses{\theta^{(k+1)}} = \wh{L}\parentheses{\theta^{(k)} - \eta \nt L\parentheses{\theta^{(k)}}} \\
& = \wh{L}\parentheses{\theta^{(k)}} - \eta \nt L\parentheses{\theta^{(k)}}\tp \nt L\parentheses{\theta^{(k)}} + \frac12 \eta^2 \nt L\parentheses{\theta^{(k)}}\tp \nt^2 L\parentheses{\xi^{(k)}} \nt L\parentheses{\theta^{(k)}} \\
& \le \wh{L}\parentheses{\theta^{(k)}} - \eta \abs{\nt L\parentheses{\theta^{(k)}}}^2 + \frac12 \eta^2 \parentheses{\frac12 \lam_0 + 4(1 + |\mu_0|^2) + 2\norm{\Sigma_0}_2} \abs{\nt L\parentheses{\theta^{(k)}}}^2 \\
& \le \wh{L}\parentheses{\theta^{(k)}} - \frac23 \eta \abs{\nt L\parentheses{\theta^{(k)}}}^2 \le (1 - 2 \eta \lam_0) \, \wh{L}\parentheses{\theta^{(k)}}.
\end{align*}
Therefore,
$$\wh{L}\parentheses{\theta^{(K)}, \{x^{(n)}\}_{n=1}^{N_x}} \le (1 - 2 \eta \lam_0)^K \, \wh{L}\parentheses{\theta^{(0)}, \{x^{(n)}\}_{n=1}^{N_x}} \le \ve.$$
\end{proof}
We remark that the theorem does not include resampling in each step, because the optimal $\theta$ is the same for the empirical and population losses. We shall consider resampling for a general problem in the future work.

\section{Conclusion and discussions}
\label{sec:conclusion}
In this work, we formulate the flow matching for FP equations as an MFC problem and solve it through the score-based normalizing flow. We conduct a convergence analysis for the flow matching problem of the OU process and validate our algorithms on several examples, including Langevin dynamics, underdamped Langevin dynamics (ULDs), and several chaotic systems.

There are several interesting directions for future work. Firstly, it is worth investigating the impact of alternative time discretization schemes beyond the forward Euler method. We shall analyze how these schemes affect the accuracy and stability of flow-matching in scientific computing and machine learning problems, such as the score-based time-reversible diffusion models. Secondly, while our convergence analysis focuses on the OU process with a short time horizon, extending this analysis to broader classes of dynamics and longer horizons is a meaningful direction. Lastly, this work does not fully explore the dynamics of second-order score functions $H_t = \nx^2\log\rho(t,x_t)$. Understanding and designing fast algorithms for the second-order score function will be left for future study.

\appendix
\section{Derivation of formulas}

\subsection{Explicit formula for dissipation}
In this subsection, we prove the dissipation properties for the free energy stated in the main text. First, we restate and prove Proposition \ref{prop:dissipation_entropy}.
\begin{proposition}[Dissipation of relative entropy]
Let $\rho$ be the solution to the FP equation \eqref{eq:FP_b}, then
\begin{equation*}
\dfrac{\rd}{\rd t} \KL{\rho(t,\cdot)}{\pi} = - \ve \int \abs{\nx \log\dfrac{\rho(t,x)}{\pi(x)}}^2 \rho(t,x) \,\rd x.
\end{equation*}
\end{proposition}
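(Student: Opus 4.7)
The plan is to differentiate the KL divergence under the integral, rewrite both the FP equation for $\rho$ and the stationary equation for $\pi$ as continuity equations with a ``score-corrected'' velocity, and then extract the Fisher-information integrand by an integration by parts.

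First I would expand
\begin{equation*}
\dfrac{\rd}{\rd t} \KL{\rho(t,\cdot)}{\pi} = \int \pt \rho(t,x) \log\dfrac{\rho(t,x)}{\pi(x)} \,\rd x + \int \pt \rho(t,x) \,\rd x,
\end{equation*}
where the second term vanishes by mass conservation. Next I would rewrite \eqref{eq:FP_b} in the ``blob'' form $\pt\rho = -\nx\cdot\sqbra{\rho\parentheses{b - \ve\nx\log\rho}}$ already used in \eqref{eq:blob}, and rewrite the stationary equation for $\pi$ as $\nx\cdot\sqbra{\pi\parentheses{b - \ve\nx\log\pi}} = 0$. Setting $u(x) := b(x) - \ve\nx\log\pi(x)$, this says the vector field $\pi u$ is divergence-free, which is the key structural fact.

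Then I would integrate by parts to obtain
\begin{equation*}
\int \pt\rho \,\log\dfrac{\rho}{\pi}\,\rd x = \int \rho \parentheses{b - \ve\nx\log\rho}\cdot\nx\log\dfrac{\rho}{\pi}\,\rd x,
\end{equation*}
and decompose $b - \ve\nx\log\rho = u - \ve\nx\log(\rho/\pi)$. The second piece contributes exactly $-\ve\int \rho \,|\nx\log(\rho/\pi)|^2\,\rd x$, which is the claimed right-hand side. For the first piece I would use the identity $\rho\,\nx\log(\rho/\pi) = \pi\,\nx(\rho/\pi)$ to rewrite
\begin{equation*}
\int \rho \,u \cdot\nx\log\dfrac{\rho}{\pi}\,\rd x = \int \pi u \cdot \nx\dfrac{\rho}{\pi}\,\rd x = -\int \parentheses{\nx\cdot(\pi u)}\dfrac{\rho}{\pi}\,\rd x = 0,
\end{equation*}
by the divergence-freeness of $\pi u$.

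The main technical obstacle is less algebraic than it is analytic: all integrations by parts require that boundary contributions at infinity vanish, and the manipulations assume $\rho(t,\cdot),\pi>0$ with enough regularity and decay so that $\log(\rho/\pi)$ and its gradient are integrable against $\rho$. I would handle this by imposing standard regularity hypotheses on $b$ and $\rho_0$ (e.g., $b$ with at most linear growth, $\rho_0$ smooth with super-polynomial decay, and $\pi$ with exponential tails coming from a Foster--Lyapunov condition as cited after \eqref{eq:SDE_x}) so that the boundary terms are justified. The conceptual content of the proof is the cancellation in the last display, which shows that the non-gradient (non-reversible) part $u$ of the drift does not contribute to entropy production, leaving only the Fisher information of $\rho$ relative to $\pi$.
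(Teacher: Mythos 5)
Your proposal is correct and takes essentially the same route as the paper: both decompose the drift into the relative-score part plus a field (your $u$, the paper's flux $\gamma = \ve\nx\log\pi - b = -u$) whose product with $\pi$ is divergence-free by the stationary equation, and both show this non-reversible part produces no entropy, leaving only the relative Fisher information. Your one-line cancellation via $\rho\,\nx\log(\rho/\pi)=\pi\,\nx(\rho/\pi)$ is simply a more direct version of the paper's longer integration-by-parts chain establishing the same orthogonality.
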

\begin{proof}
We first make an orthogonal decomposition of the flow. We define the flux function as $\gamma(x) := \ve \nx \log \pi(x) - b(x)$, then $\nx \cdot\parentheses{\gamma(x) \pi(x)} = 0$. We can rewrite the FP equation \eqref{eq:FP_b} as
\begin{equation*}
\pt \rho(t,x) = \ve \nx \cdot \parentheses{\rho(t,x) \nx \log \dfrac{\rho(t,x)}{\pi(x)}} + \nx \cdot (\rho(t,x) \gamma(x)).
\end{equation*}
The reason we call it orthogonal decomposition is because
\begin{equation*}
\begin{aligned}
& \quad \int \gamma(x)\tp \nx \log \parentheses{\dfrac{\rho(t,x)}{\pi(x)}} \rho(t,x) \,\rd x = - \int \nx \cdot \parentheses{\gamma(x) \rho(t,x)}  \log \parentheses{\dfrac{\rho(t,x)}{\pi(x)}} \,\rd x \\
& =- \int \nx \cdot \parentheses{\gamma(x) \pi(x) \dfrac{\rho(t,x)}{\pi(x)}}  \log \parentheses{\dfrac{\rho(t,x)}{\pi(x)}} \,\rd x \\
& = - \int \gamma(x)\tp \pi(x) \nx \parentheses{\dfrac{\rho(t,x)}{\pi(x)}} \log \parentheses{\dfrac{\rho(t,x)}{\pi(x)}} \,\rd x \\
& = \int \nx\cdot \parentheses{ \gamma(x) \pi(x) \log \dfrac{\rho(t,x)}{\pi(x)}} \dfrac{\rho(t,x)}{\pi(x)} \,\rd x = \int \gamma(x)\tp \pi(x) \, \nx \log \dfrac{\rho(t,x)}{\pi(x)} \, \dfrac{\rho(t,x)}{\pi(x)} \,\rd x \\
& = \int \gamma(x)\tp \pi(x) \, \nx \parentheses{\dfrac{\rho(t,x)}{\pi(x)}} \,\rd x = - \int \nx \cdot \parentheses{\gamma(x) \pi(x)} \dfrac{\rho(t,x)}{\pi(x)} = 0,
\end{aligned}
\end{equation*}
where we used the fact that $\nx\cdot(\gamma(x) \pi(x)) = 0$. Therefore,
\begin{equation*}
\begin{aligned}
& \quad \dfrac{\rd}{\rd t} \KL{\rho(t,\cdot)}{\pi} = \dfrac{\rd}{\rd t} \int \rho(t,x) \log \dfrac{\rho(t,x)}{\pi(x)} \,\rd x \\
& = \int \pt \rho(t,x) \parentheses{\log \dfrac{\rho(t,x)}{\pi(x)} + 1} \,\rd x = \int \pt \rho(t,x) \log\dfrac{\rho(t,x)}{\pi(x)}  \,\rd x \\
& = \int \sqbra{ \ve \nx \cdot \parentheses{\rho(t,x) \nx \log \dfrac{\rho(t,x)}{\pi(x)}} + \nx \cdot (\rho(t,x) \gamma(x)) } \log\dfrac{\rho(t,x)}{\pi(x)}  \,\rd x \\
& = \int \ve \nx \cdot \parentheses{\rho(t,x) \nx \log \dfrac{\rho(t,x)}{\pi(x)}} \log\dfrac{\rho(t,x)}{\pi(x)} = - \ve \int \abs{\nx \log\dfrac{\rho(t,x)}{\pi(x)}}^2 \rho(t,x) \,\rd x.
\end{aligned}
\end{equation*}
\end{proof}
Next, we restate and prove Proposition \ref{prop:ULD_dissipation}.
\begin{proposition}[Free energy dissipation of ULD]
The ULD satisfies the following energy dissipation formula
\begin{equation*}
\dfrac{\rd}{\rd t} \Dh(\rho(t,\cdot,\cdot)) = - \dfrac{\gamma}{\beta} \int_{\RR^{2d}} \rho(t,x,v) \abs{\nv\log\parentheses{\dfrac{\rho(t,x,v)}{\pi(x,v)}}}^2 \rd x \, \rd v.
\end{equation*}
\end{proposition}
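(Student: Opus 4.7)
The plan mirrors the argument used for Proposition \ref{prop:dissipation_entropy}, but now the conservative and dissipative parts of the dynamics only interact in the $v$-variable. First, I would reduce the statement to the KL dissipation. Since $\log \pi(x,v) = -\beta H(x,v) - \log Z$, we have $\log \rho + \beta H = \log(\rho/\pi) - \log Z$, so
\begin{equation*}
\Dh(\rho(t,\cdot,\cdot)) = \KL{\rho(t,\cdot,\cdot)}{\pi} - \log Z,
\end{equation*}
and it suffices to compute $\tfrac{\rd}{\rd t}\KL{\rho(t,\cdot,\cdot)}{\pi}$. Using $\int \pt\rho\,\rd x\,\rd v = 0$, this derivative equals $\int \pt\rho \cdot \log(\rho/\pi)\,\rd x\,\rd v$.

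Next, I would split the FP operator for ULD \eqref{eq:FP_ULD} into a Hamiltonian part and a dissipative part. The key identity is
\begin{equation*}
-\nv \cdot (\gamma v\,\rho) + \frac{\gamma}{\beta}\Delta_v \rho = \frac{\gamma}{\beta}\,\nv\cdot\!\left(\rho\,\nv\log\frac{\rho}{\pi}\right),
\end{equation*}
which follows from $\nv\log\pi = -\beta v$. Substituting this into \eqref{eq:FP_ULD} yields
\begin{equation*}
\pt\rho = \underbrace{-\nx\cdot(v\rho) + \nv\cdot(\nx U\,\rho)}_{\text{Hamiltonian part}} + \frac{\gamma}{\beta}\nv\cdot\!\left(\rho\,\nv\log\frac{\rho}{\pi}\right).
\end{equation*}

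The main step is to show that the Hamiltonian part is orthogonal to $\log(\rho/\pi)$ in the $L^2(\rd x\,\rd v)$ sense, i.e.\ it contributes zero to the dissipation. Integration by parts gives
\begin{equation*}
\int \big[-\nx\cdot(v\rho) + \nv\cdot(\nx U\,\rho)\big]\log\frac{\rho}{\pi}\,\rd x\,\rd v
= \int v\rho\cdot\nx\log\frac{\rho}{\pi}\,\rd x\,\rd v - \int \nx U\cdot\nv\rho\,\rd x\,\rd v.
\end{equation*}
Using $\nx\log(\rho/\pi) = \nx\log\rho + \beta\nx U$ and $\nv\log(\rho/\pi) = \nv\log\rho + \beta v$, the $\beta$-terms in the two integrals cancel (they both equal $\beta\int v\cdot\nx U\,\rho\,\rd x\,\rd v$), and the remaining pieces $\int v\cdot\nx\rho\,\rd x\,\rd v$ and $\int \nx U\cdot\nv\rho\,\rd x\,\rd v$ vanish by integrating out $x$ and $v$ respectively using decay at infinity. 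This Hamiltonian-flow cancellation is the delicate step; everything else is routine.

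Finally, for the dissipative part, one more integration by parts gives
\begin{equation*}
\int \frac{\gamma}{\beta}\nv\cdot\!\left(\rho\,\nv\log\frac{\rho}{\pi}\right)\log\frac{\rho}{\pi}\,\rd x\,\rd v
= -\frac{\gamma}{\beta}\int \rho\,\left|\nv\log\frac{\rho}{\pi}\right|^2\rd x\,\rd v,
\end{equation*}
which combined with the previous step yields \eqref{eq:ULD_dissipation}. Throughout, I would implicitly assume sufficient decay of $\rho$ (and its derivatives) at infinity to justify the boundary-free integrations by parts, as is standard in this setting.
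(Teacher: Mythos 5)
Your argument is correct and reaches the same conclusion, but by a genuinely different route. The paper rewrites the ULD Fokker--Planck operator in the compact matrix form $\pt\rho = \nabla\cdot\big[\rho\,(D+J_d)\,\nabla\log(\rho/\pi)\big]$ with $D=\tfrac{\gamma}{\beta}\bigl[\begin{smallmatrix}0&0\\0&I_n\end{smallmatrix}\bigr]$ and $J_d=\bigl[\begin{smallmatrix}0&I_n\\-I_n&0\end{smallmatrix}\bigr]$; one integration by parts then gives $-\int\rho\,\nabla\log(\rho/\pi)^\top(D+J_d)\nabla\log(\rho/\pi)$ and the $J_d$ contribution drops out \emph{pointwise} by skew-symmetry of $J_d$. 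You instead split the operator into an explicit Hamiltonian (Liouville) transport part and a dissipative part, integrate each by parts separately, and verify by hand that the cross terms cancel and the residual pieces vanish by decay. The two arguments encode the same structural fact --- Hamiltonian transport preserves the relative entropy --- but the paper's version hides the cancellation in a single algebraic identity ($\xi^\top J_d\xi=0$), while yours makes it visible term-by-term, in the spirit of the paper's own proof of Proposition~\ref{prop:dissipation_entropy}. Your approach is more elementary and arguably easier to generalize to other Hamiltonian structures; the paper's is more compact.

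One slip: in your displayed integration-by-parts identity, the second term should read $-\int\nx U\,\rho\cdot\nv\log\tfrac{\rho}{\pi}\,\rd x\,\rd v$, not $-\int\nx U\cdot\nv\rho\,\rd x\,\rd v$; the $\beta$-term you subsequently cancel against $\beta\int v\cdot\nx U\,\rho$ comes from expanding $\nv\log(\rho/\pi)$, so it must still be present at that stage. Your following sentence carries out the calculation correctly, so this is only a typo in the display, not a gap in the reasoning.
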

\begin{proof}
Let $\nabla=\nabla_{x,v}$ denote the full gradient. We denote
$$D = \dfrac{\gamma}{\beta} \begin{bmatrix}0&0\\0&I_n\end{bmatrix} \text{ and recall that } J_d = \begin{bmatrix}0&I_n\\-I_n&0\end{bmatrix}.$$
Then, the FP equation \eqref{eq:FP_ULD} can be rewritten as
\begin{align*}
\pt\rho &= \nabla\cdot\sqbra{\rho \parentheses{ \begin{bmatrix} 0\\ \gamma\beta^{-1}(\nv\log\rho + \beta v) \end{bmatrix} - \begin{bmatrix} v\\-\nx U(x) \end{bmatrix}}} \\
& = \nabla\cdot\sqbra{\rho \parentheses{ D\, \nabla\log\dfrac{\rho}{\pi} - J_d \, \nabla\log\pi}} \\
& =  \nabla\cdot\sqbra{\rho \, (D+J_d)\, \nabla\log\dfrac{\rho}{\pi}},
\end{align*}
where the last equality is because $\nabla\cdot(\rho \, J_d \, \nabla\log\rho) = \nabla\cdot(J_d \, \nabla\rho) = 0$. Therefore
\begin{equation*}
\begin{aligned}
& \quad \dfrac{\rd}{\rd t} \Dh(\rho(t,\cdot,\cdot)) = \dfrac{\rd}{\rd t} \KL{\rho(t,\cdot,\cdot)}{\pi} \\
& = \int_{\RR^{2d}} \parentheses{1+\log\parentheses{\dfrac{\rho(t,x,v)}{\pi(x,v)}}} \, \pt \rho(t,x,v) \,\rd x\,\rd v \\
& = \int_{\RR^{2d}} \parentheses{1+\log\parentheses{\dfrac{\rho(t,x,v)}{\pi(x,v)}}} \,\nabla\cdot\sqbra{\rho(t,x,v) (D+J_d)\, \nabla\log\parentheses{ \dfrac{\rho(t,x,v)}{\pi(x,v)}}} \rd x\,\rd v \\
& = - \int_{\RR^{2d}} \nabla\log\parentheses{\dfrac{\rho(t,x,v)}{\pi(x,v)}}\tp \rho(t,x,v) \, (D+J_d)\, \nabla\log\parentheses{\dfrac{\rho(t,x,v)}{\pi(x,v)}} \rd x\,\rd v \\
& = - \int_{\RR^{2d}} \nabla\log\parentheses{\dfrac{\rho(t,x,v)}{\pi(x,v)}}\tp \rho(t,x,v) \, D\, \nabla\log\parentheses{\dfrac{\rho(t,x,v)}{\pi(x,v)}} \rd x\,\rd v \\
& = -\dfrac{\gamma}{\beta} \int_{\RR^{2d}}  \rho(t,x,v) \abs{\nv\log\parentheses{\dfrac{\rho(t,x,v)}{\pi(x,v)}}}^2 \rd x\,\rd v.
\end{aligned}
\end{equation*}
\end{proof}

\subsection{Explicit solution for Gaussian case}
For the Langevin dynamic
$$\rd x_t = -(I_d + cJ_d) x_t \,\rd t + \sqrt{2\ve} \,\rd W_t$$
in Section \ref{sec:LangevinGaussian} with Gaussian initialization $x_0 \sim N(0,\Sigma_0)$, the state remains Gaussian $x_t \sim N(0,\Sigma_t)$. The covariance matrix $\Sigma_t$ satisfies
$$\pt \Sigma_t = A \Sigma_t + \Sigma_t A\tp + 2\ve I_d,$$
where $A := -(I_d + cJ_d)$. If we further assume $\Sigma_0 = \delta I_{2n}$, then
\begin{equation*}
\Sigma(t) = \sqbra{e^{-2t}\delta + \ve(1 - e^{-2t})}I_{2n}.
\end{equation*}
The corresponding score function is 
$$\nx\log\rho(t,x) = -\Sigma(t)^{-1}x = \dfrac{-x}{e^{-2t}\delta + \ve(1 - e^{-2t})}.$$
The true composed velocity is
\begin{equation*}
f(t,x) = b(x) - \ve \nx\log\rho(t,x) = Ax + \dfrac{\ve x}{e^{-2t}\delta + \ve(1 - e^{-2t})}.
\end{equation*}
We use these expressions to provide reference solutions. In the numerical experiments, we set $\delta=1$. Note that $f(t,x)$ does not converge to $0$ as $t\to\infty$ when $c \neq 0$. This makes the Langevin dynamic with non-gradient drift different from gradient drift.

For ULD with quadratic potential function $U(x)=\frac12|x|^2$, we derive the this ODE for $2d=2$ with zero mean $(x_t,v_t) \sim N(0,\Sigma_t)$ for simplicity. A general Gaussian distribution with a non-zero mean can be derived in a similar way. We denote the entries of $\Sigma_t$ by
$$\Sigma_t = \begin{bmatrix} \Sxx_t&\Sxv_t\\\Svx_t&\Svv_t \end{bmatrix}$$
with $\Sxv_t=\Svx_t$. Then we have
$$\pt \Sxx_t = \pt \EE[x_t^2] = 2\EE[x_t v_t] = 2\Sxv_t.$$
Since
$$\rd (x_tv_t) = \sqbra{v_t^2-(\gamma v_t + x_t)x_t} \,\rd t + \sqrt{2\gamma \beta^{-1}} x_t\,\rd W_t,$$
we have
$$\pt \Sxv_t = \Svv_t - \gamma \Sxv_t - \Sxx_t.$$
Also, since
$$\rd(v_t^2) = \sqbra{2\gamma\beta^{-1} - 2v_t(\gamma v_t + x_t)}\,\rd t + 2\sqrt{2\gamma \beta^{-1}} v_t\,\rd W_t,$$
we have
$$\pt \Svv_t = 2\gamma\beta^{-1} - 2\Svv_t - 2\Sxv_t.$$
Combining the ODEs together, we get
\begin{equation}\label{eq:ULD_Sigma_ODE}
\pt \begin{bmatrix} \Sxx_t \\ \Sxv_t \\ \Svv_t \end{bmatrix} = \begin{bmatrix} 0&2&0\\ -1&-\gamma&1\\ 0&-2&-2\gamma \end{bmatrix} \begin{bmatrix} \Sxx_t \\ \Sxv_t \\ \Svv_t \end{bmatrix} + \begin{bmatrix} 0\\0\\2\gamma\beta^{-1} \end{bmatrix}.
\end{equation}
In practice, we choose $\beta=\gamma=1$ and approximate the solution to $\Sigma_t$ using Runge–-Kutta $4$ (RK4) method. Figure \ref{fig:ULDGaussian_Sigma} shows the reference covariance evolution obtained from the RK4 scheme, together with the empirical estimation of the covariance with simulated $\num{e4}$ trajectories. We observe that the reference covariance coincides with its empirical estimation. Additionally, the empirical curve for $\Sxx_t$ is smoother because $x_t$ does not have noise.
\begin{figure}[!htbp]
\centering
\includegraphics[width=\linewidth]{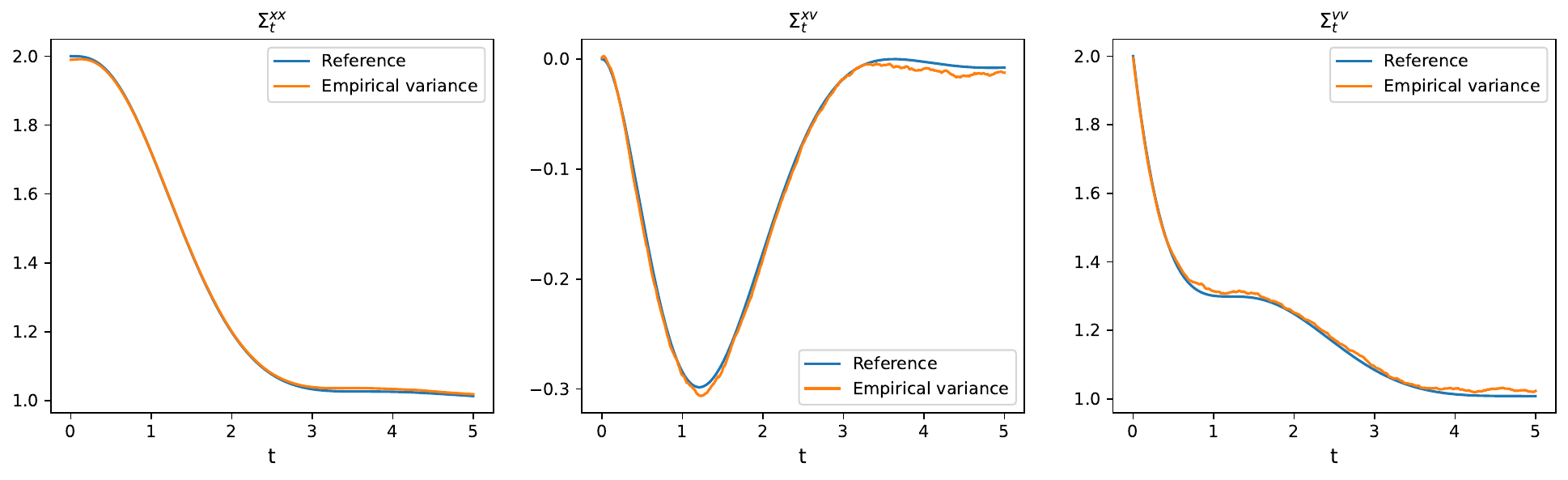}
\caption{Reference covariance evolution computed from RK4 scheme and empirical estimation.}
\label{fig:ULDGaussian_Sigma}
\end{figure}

\subsection{Score-based normalizing flow for Underdamped Langevin dynamics}
In this subsection, we derive the score-based normalizing flow for ULD. We denote the composed velocity for ULD as
$$f(t,x,v) = \begin{bmatrix} 
v \\ f_v(t,x,v) \end{bmatrix}.$$
Then, the probability flow dynamic for the state is 
$$\pt \begin{bmatrix} x_t\\v_t \end{bmatrix} = \begin{bmatrix} v_t \\ f_v(t,x_t,v_t) \end{bmatrix}.$$
By the ODE dynamics in Proposition \ref{prop:ODE}, $L_t=\rho(t,x_t,v_t)$, $l_t=\log\rho(t,x_t,v_t)$, $s^x_t = \nx \log \rho(t,x_t,v_t)$, and $s^v_t = \nv \log \rho(t,x_t,v_t)$ satisfy
\begin{equation}\label{eq:ULD_ODEs}
\begin{aligned}
\pt L_t &= - \nv \cdot f_v(t,x_t,v_t) L_t, \\
\pt l_t &= - \nv \cdot f_v(t,x_t,v_t), \\
\pt \begin{bmatrix} s^x_t\\s^v_t \end{bmatrix} &= -\begin{bmatrix} 0&I_d\\\nx f_v & \nv f_v \end{bmatrix}\tp \begin{bmatrix} s^x_t\\s^v_t \end{bmatrix} - \begin{bmatrix} \nx(\nv\cdot f_v) \\ \nv(\nv\cdot f_v) \end{bmatrix}.
\end{aligned}
\end{equation}

In Gaussian case where $\begin{bmatrix}x_t\\v_t\end{bmatrix} \sim N(0,\Sigma_t)$, the density function is
$$\rho(t,x,v) = (2\pi)^{-d} \det(\Sigma_t)^{-\frac12} \exp\parentheses{ -\frac12 \begin{bmatrix}x\\v\end{bmatrix}\tp \Sigma_t^{-1} \begin{bmatrix}x\\v\end{bmatrix}},$$
where the covariance matrix $\Sigma_t$ satisfies \eqref{eq:ULD_Sigma_ODE}. Consequently, a reference solution to the composed velocity is
$$f_v(t,x,v) = -(x+\gamma v) + \dfrac{\gamma}{\beta}\parentheses{(\Sigma_t^{-1})^{xv} x + (\Sigma_t^{-1})^{vv} v}.$$
We can also get the reference score function $s_t = -\Sigma_t^{-1} \begin{bmatrix}x_t\\v_t\end{bmatrix}$. The reference free energy and its dissipation can be computed through \eqref{eq:ULD_energy} and \eqref{eq:ULD_dissipation}.

\section{Details for numerical implementations}
We present the details for numerical implementation in this section.

\subsection{Details for Underdamped Langevin dynamic}
In the ULD example, the numerical discretization for the state and other dynamics in \eqref{eq:ULD_ODEs} are
\begin{align*}
x_{t_{j+1}} &= x_{t_j} + \dt \, v_{t_j},\\
v_{t_{j+1}} &= v_{t_j} + \dt \, \fvt(t_j, x_{t_j},v_{t_j}),\\
L_{t_{j+1}} &= L_{t_j} - \dt \, \nv\cdot \fvt(t_j, x_{t_j},v_{t_j}) L_{t_j},\\
l_{t_{j+1}} &= l_{t_j} - \dt \, \nv\cdot \fvt(t_j, x_{t_j},v_{t_j}),\\
s^x_{t_{j+1}} &= s^x_{t_j} - \dt \parentheses{ \nx \fvt(t_j, x_{t_j},v_{t_j})\tp s^v_{t_j} + \nx( \nv\cdot \fvt(t_j, x_{t_j},v_{t_j})) },\\
s^v_{t_{j+1}} &= s^v_{t_j} - \dt \parentheses{s^x_{t_j}+ \nv \fvt(t_j, x_{t_j},v_{t_j})\tp s^v_{t_j} + \nv( \nv\cdot \fvt(t_j, x_{t_j},v_{t_j})) }.
\end{align*}
As an analog to \eqref{eq:free_energy_discrete} and \eqref{eq:energy_dissipation_discrete}. An estimation of the free energy \eqref{eq:ULD_energy} and its dissipation \eqref{eq:ULD_dissipation} for ULD is obtained through
$$\widehat{\Dh}(\rho(t_j,\cdot,\cdot)) = \dfrac{1}{N_x} \sum_{n=1}^{N_x} \parentheses{l^{(n)}_{t_j} + \beta H(x^{(n)}_{t_j}, v^{(n)}_{t_j})},$$
and 
$$\widehat{\pt \Dh}(\rho(t_j,\cdot,\cdot)) = - \dfrac{\gamma}{N_x \beta} \sum_{n=1}^{N_x} \abs{s^{v\,(n)}_{t_j} - \nv\log \pi(x^{(n)}_{t_j}, v^{(n)}_{t_j})}^2.$$

For ULD with a quadratic potential $U(x)=\frac12|x|^2$, the state dynamic demonstrates a clear Hamiltonian structure. In this setting, we apply a symplectic scheme to enhance the stability, where we compute $x_{t_{j+1}} = x_{t_j} + \dt \, v_{t_{j+1}}$ after getting $v_{t_{j+1}}$. The update rules for other quantities remains unchanged. For general potential function, we keep using the forward Euler scheme. We conclude the numerical algorithm for ULD with quadratic and general potential in Algorithm \ref{alg:ULDGaussian} and \ref{alg:ULDDW}. 
For both cases, we split intervals to overcome the issue of long time horizon. We train a neural network $f_m:=f_{v \theta_m}$ within each stage as introduced in \ref{sec:algorithm_long}.

\begin{algorithm}[htbp!]
\caption{Flow matching solver for ULD with quadratic potential}
\begin{algorithmic}[1]
\STATE \textbf{Input:}~ Parameter $\beta, \gamma$, neural network structure for each stage, number of steps for the first stage $N_{\text{step}0}$ and the following stages $N_{\text{step}}$, learning rages, $N_x,N_t, N_T$
\STATE \textbf{Output:}~ Approximated composed velocity fields $\{f_m(\cdot,\cdot)\}_{m=1}^{N_T}$ for the MFC problem
\STATE \textbf{Initialization:}~Parameter $\theta_1$ for the first neural network $f_1(\cdot,\cdot)$, $\dt=T/(N_T N_t)$
\FOR{$\text{step}=1,2,\ldots, N_{\text{step}0}$}
\STATE{$\text{Loss} = 0$, $t_0=0$, Sample $\{(x^{(n)}_0, v^{(n)}_0)\}_{n=0}^{N_x}$ i.i.d. from $\rho_0$}
\STATE{Compute $s_0^{x \, (n)} = \nx \log\rho_0(x^{(n)}_0, v^{(n)}_0)$ and $s_0^{v\,(n)} = \nv \log\rho_0(x^{(n)}_0, v^{(n)}_0)$}
\FOR{$j=0,1,\ldots,N_t-1$}
\STATE{$\text{Loss} = \text{Loss} + \frac{1}{N_x} \sum_{n=1}^{N_x} |f_1(t_j,x^{(n)}_{t_j},v^{(n)}_{t_j}) + (\gamma v^{(n)}_{t_j} +x^{(n)}_{t_j}) + \gamma\beta^{-1} {s_{t_j}^v}^{(n)}|^2\dt$}
\STATE{$t_{j+1} = t_0 + (j+1)\dt$}\hfill\COMMENT{set time stamp}
\STATE{$v^{(n)}_{t_{j+1}} = v^{(n)}_{t_j} + \dt \, f_1(t_j, x^{(n)}_{t_j},v^{(n)}_{t_j})$,~ $x^{(n)}_{t_{j+1}} = x^{(n)}_{t_j} + \dt \, v^{(n)}_{t_{j+1}}$}\hfill\COMMENT{update states}
\STATE{$s_{t_{j+1}}^{x\,(n)} = s^{x\,(n)}_{t_j} - \dt\sqbra{ \nx f_1(t_j, x^{(n)}_{t_j}, v^{(n)}_{t_j})\tp s^{v\,(n)}_{t_j} + \nx( \nv\cdot f_1(t_j, x^{(n)}_{t_j},v^{(n)}_{t_j})) }$}
\STATE{$s_{t_{j+1}}^{v\,(n)} = s^{v\,(n)}_{t_j} - \dt \sqbra{s^{x\,(n)}_{t_j} + \nv f_1(t_j, x^{(n)}_{t_j}, v^{(n)}_{t_j})\tp s^{v\,(n)}_{t_j} + \nx( \nv\cdot f_1(t_j, x^{(n)}_{t_j},v^{(n)}_{t_j})) }$}
\ENDFOR
\STATE{Update $\theta_m$ through Adam method to minimize $\text{Loss}$}
\ENDFOR
\FOR{$m = 2,3,\ldots,N_T$} 
\STATE{Initialize $\theta_m$ as $\theta_{m-1}$ from previous stage}
\hfill \COMMENT{warm-start}
\FOR{$\text{step}=1,2,\ldots, N_{\text{step}}$}
\STATE{Sample $\{(x^{(n)}_0, v^{(n)}_0)\}_{n=0}^{N_x}$ i.i.d. from $\rho_0$}
\STATE{Compute $s_0^{x \, (n)} = \nx \log\rho_0(x^{(n)}_0, v^{(n)}_0)$ and $s_0^{v\,(n)} = \nv \log\rho_0(x^{(n)}_0, v^{(n)}_0)$}
\FOR{$m' = 1,\ldots,m-1$}
\STATE{$t_0 = (m'-1)T'$}\hfill\COMMENT{initial time for the stage}
\FOR{$j=0,\ldots,N_t-1$}
\STATE{$t_{j+1} = ((m'-1)N_t+j+1)\dt$}\hfill\COMMENT{set time stamp}
\STATE{$v^{(n)}_{t_{j+1}} = v^{(n)}_{t_j} + \dt \, f_{m'}(t_j, x^{(n)}_{t_j},v^{(n)}_{t_j})$,~ $x^{(n)}_{t_{j+1}} = x^{(n)}_{t_j} + \dt \, v^{(n+1)}_{t_{j+1}}$}\hfill\COMMENT{update states}
\STATE{$s_{t_{j+1}}^{x\,(n)} = s^{x\,(n)}_{t_j} - \dt\sqbra{ \nx f_{m'}(t_j, x^{(n)}_{t_j}, v^{(n)}_{t_j})\tp s^{v\,(n)}_{t_j} + \nx( \nv\cdot f_{m'}(t_j, x^{(n)}_{t_j},v^{(n)}_{t_j})) }$}
\STATE{$s_{t_{j+1}}^{v\,(n)} = s^{v\,(n)}_{t_j} - \dt \sqbra{s^{x\,(n)}_{t_j} + \nv f_{m'}(t_j, x^{(n)}_{t_j}, v^{(n)}_{t_j})\tp s^{v\,(n)}_{t_j} + \nx( \nv\cdot f_{m'}(t_j, x^{(n)}_{t_j},v^{(n)}_{t_j})) }$}
\ENDFOR
\ENDFOR
\STATE{$\text{Loss} = 0$, $t_0 = (m-1)T'$}
\STATE{Follow steps in line 7-14 to train $f_m$ with initial data $\{x_{t_0}^{(n)}, v_{t_0}^{(n)}, s_{t_0}^{x\,(n)}, s_{t_0}^{v\,(n)}\}_{n=1}^{N_x}$}
\ENDFOR
\ENDFOR
\end{algorithmic}
\label{alg:ULDGaussian}
\end{algorithm}

\begin{algorithm}[htbp!]
\caption{Flow matching solver for ULD with general potential}
\begin{algorithmic}[1]
\STATE \textbf{Input:}~ Parameter $\beta, \gamma$, neural network structure for each stage, number of steps for the first stage $N_{\text{step}0}$ and the following stages $N_{\text{step}}$, learning rages, $N_x,N_t, N_T$
\STATE \textbf{Output:}~ Approximated composed velocity fields $\{f_m(\cdot,\cdot)\}_{m=1}^{N_T}$ for the MFC problem
\STATE \textbf{Initialization:}~Parameter $\theta_1$ for the first neural network $f_1(\cdot,\cdot)$, $\dt=T/(N_T N_t)$
\FOR{$\text{step}=1,2,\ldots, N_{\text{step}0}$}
\STATE{$\text{Loss} = 0$, $t_0=0$, Sample $\{(x^{(n)}_0, v^{(n)}_0)\}_{n=0}^{N_x}$ i.i.d. from $\rho_0$}
\STATE{Compute $s_0^{x \, (n)} = \nx \log\rho_0(x^{(n)}_0, v^{(n)}_0)$ and $s_0^{v\,(n)} = \nv \log\rho_0(x^{(n)}_0, v^{(n)}_0)$}
\FOR{$j=0,1,\ldots,N_t-1$}
\STATE{$\text{Loss} = \text{Loss} + \frac{1}{N_x} \sum_{n=1}^{N_x} |f_1(t_j,x^{(n)}_{t_j},v^{(n)}_{t_j}) + (\gamma v^{(n)}_{t_j} + \nx U(x^{(n)}_{t_j})) + \gamma\beta^{-1} {s_{t_j}^v}^{(n)}|^2\dt$}
\STATE{$t_{j+1} = t_0+ (j+1)\dt$}\hfill\COMMENT{set time stamp}
\STATE{$v^{(n)}_{t_{j+1}} = v^{(n)}_{t_j} + \dt \, f_1(t_j, x^{(n)}_{t_j},v^{(n)}_{t_j})$,~ $x^{(n)}_{t_{j+1}} = x^{(n)}_{t_j} + \dt \, v^{(n)}_{t_j}$}\hfill\COMMENT{update states}
\STATE{$s_{t_{j+1}}^{x\,(n)} = s^{x\,(n)}_{t_j} - \dt\sqbra{ \nx f_1(t_j, x^{(n)}_{t_j}, v^{(n)}_{t_j})\tp s^{v\,(n)}_{t_j} + \nx( \nv\cdot f_1(t_j, x^{(n)}_{t_j},v^{(n)}_{t_j})) }$}
\STATE{$s_{t_{j+1}}^{v\,(n)} = s^{v\,(n)}_{t_j} - \dt \sqbra{s^{x\,(n)}_{t_j} + \nv f_1(t_j, x^{(n)}_{t_j}, v^{(n)}_{t_j})\tp s^{v\,(n)}_{t_j} + \nx( \nv\cdot f_1(t_j, x^{(n)}_{t_j},v^{(n)}_{t_j})) }$}
\ENDFOR
\STATE{Update $\theta_m$ through Adam method to minimize $\text{Loss}$}
\ENDFOR
\FOR{$m = 2,3,\ldots,N_T$} 
\STATE{Initialize $\theta_m$ as $\theta_{m-1}$ from previous stage}
\hfill \COMMENT{warm-start}
\FOR{$\text{step}=1,2,\ldots, N_{\text{step}}$}
\STATE{Sample $\{(x^{(n)}_0, v^{(n)}_0)\}_{n=0}^{N_x}$ i.i.d. from $\rho_0$}
\STATE{Compute $s_0^{x \, (n)} = \nx \log\rho_0(x^{(n)}_0, v^{(n)}_0)$ and $s_0^{v\,(n)} = \nv \log\rho_0(x^{(n)}_0, v^{(n)}_0)$}
\FOR{$m' = 1,\ldots,m-1$}
\STATE{$t_0 = (m'-1)T'$}\hfill\COMMENT{initial time for the stage}
\FOR{$j=0,\ldots,N_t-1$}
\STATE{$t_{j+1} = ((m'-1)N_t+j+1)\dt$}\hfill\COMMENT{set time stamp}
\STATE{$v^{(n)}_{t_{j+1}} = v^{(n)}_{t_j} + \dt \, f_{m'}(t_j, x^{(n)}_{t_j},v^{(n)}_{t_j})$,~ $x^{(n)}_{t_{j+1}} = x^{(n)}_{t_j} + \dt \, v^{(n)}_{t_{j+1}}$}\hfill\COMMENT{update states}
\STATE{$s_{t_{j+1}}^{x\,(n)} = s^{x\,(n)}_{t_j} - \dt\sqbra{ \nx f_{m'}(t_j, x^{(n)}_{t_j}, v^{(n)}_{t_j})\tp s^{v\,(n)}_{t_j} + \nx( \nv\cdot f_{m'}(t_j, x^{(n)}_{t_j},v^{(n)}_{t_j})) }$}
\STATE{$s_{t_{j+1}}^{v\,(n)} = s^{v\,(n)}_{t_j} - \dt \sqbra{s^{x\,(n)}_{t_j} + \nv f_{m'}(t_j, x^{(n)}_{t_j}, v^{(n)}_{t_j})\tp s^{v\,(n)}_{t_j} + \nx( \nv\cdot f_{m'}(t_j, x^{(n)}_{t_j},v^{(n)}_{t_j})) }$}
\ENDFOR
\ENDFOR
\STATE{$\text{Loss} = 0$, $t_0 = (m-1)T'$}
\STATE{Follow steps in line 7-14 to train $f_m$ with initial data $\{x_{t_0}^{(n)}, v_{t_0}^{(n)}, s_{t_0}^{x\,(n)}, s_{t_0}^{v\,(n)}\}_{n=1}^{N_x}$}
\ENDFOR
\ENDFOR
\end{algorithmic}
\label{alg:ULDDW}
\end{algorithm}

\subsection{Hyperparameters and other details}
In this subsection, we present the hyperparameters and other details for the numerical experiments.

\subsubsection*{Neural network} We parametrize the composed velocity field as the known drift field $b(x)$ added by a multilayer perceptron with $\tanh$ as the activation function. The drift field serve as a baseline that will benefit the training procedure. For all the experiments in $2$ and $3$ dimensions, we apply $2$-layer networks with $100$ neurons in the hidden layers. For $50$ and $100$ dimensional examples, the widths of the neural networks are $400$.

\subsubsection*{Hyperparameters}
For all the experiments, we apply a step size $\dt=0.01$ and learning rate of $0.01$. The other hyperparameters are presented in Table \ref{tab:hyperparameters}.
\begin{table}[htbp!]
\centering
\begin{tabular}{c|ccccc}
\midrule
Example & $(N_{\text{step}0}, N_{\text{step}})$ & $(T,N_T)$ & $N_x$ & other parameters\\
\midrule
Langevin OU & $(,500)$ & $(1.0,1)$ & $500$ & $\ve=0.5$, $c=0.5$ \\
Langevin double-well & $(,500)$ & $(1.0,1)$ & $500$ & $\ve=0.5$, $c=0.5$ \\
ULD Gaussian & $(500,200)$ & $(5.0,5)$ & $1000$ & $\Sigma_0=2I_2$, $\beta=\gamma=1$ \\
ULD double-well & $(500,200)$ & $(5.0,25)$ & $1000$ & $\Sigma_0=I_2$, $\beta=\gamma=1$ \\
Lorenz & $(500,200)$ & $(5.0,25)$ & $1000$ & $\ve=0.1$, $s=0.2$\\
Arctangent Lorenz & $(500,200)$ & $(5.0,25)$ & $1000$ & $\ve=0.1$, $s=0.1$\\
Van der Pol & $(500,200)$ & $(2.0,10)$ & $1000$ & $\ve=0.1$, $\mu=2.0$\\
Swimmer & $(500,200)$ & $(5.0,25)$ & $1000$ & $\ve=1.0$, $\gamma=0.1$\\
Langevin OU ($50d$) & $(,5000)$ & $(1.0,1)$ & $1000$ & $\ve=0.5$, $c=0.5$ \\
Langevin double-well ($50d$) & $(,5000)$ & $(1.0,1)$ & $1000$ & $\ve=0.5$, $c=0.5$ \\
ULD Gaussian ($50d$) & $(2000,1000)$ & $(5.0,5)$ & $1000$ & $\Sigma_0=2I_{50}$, $\beta=\gamma=1$ \\
ULD double-well ($50d$) & $(2000,1000)$ & $(5.0,25)$ & $1000$ & $\Sigma_0=I_{50}$, $\beta=\gamma=1$ \\
Harmonic particles ($100d$) & $(2000,1000)$ & $(6.0,6)$ & $400$ & $a=2,\omega=1,\alpha=0.5,\varepsilon=0.25$\\
\midrule
\end{tabular}
\caption{Hyperparameters for the numerical examples. $c$ is the coefficient for anti-symmetry part of the drift function. $N_T$ is the number of subintervals that we partition into. $\ve$ is the noise level. $s$ is the scaling parameter introduced in Section \ref{sec:chaotic}.}
\label{tab:hyperparameters}
\end{table}

\section*{Declaration of generative AI and AI-assisted technologies in the writing process}
During the preparation of this work the authors used ChatGPT and Grammarly in order to polish the language of the article and correct grammatical mistakes. After using these tools, the authors reviewed and edited the content as needed and take full responsibility for the content of the published article.

\medskip
\noindent\textbf{Acknowledgements}. 
M. Zhou's work is partially supported by the AFOSR YIP award No. FA9550-23-1-0087. W. Li's work is supported by the AFOSR YIP award No. FA9550-23-1-0087, NSF RTG: 2038080, and NSF DMS: 2245097.

\bibliographystyle{elsarticle-num} 
\bibliography{ref}

\end{document}